\newlength{\continueindent}
\newcommand*{\ALG@customparshape}{\parshape 2 \leftmargin \linewidth \dimexpr\ALG@tlm+\continueindent\relax \dimexpr\linewidth+\leftmargin-\ALG@tlm-\continueindent\relax}
\providecommand{\keywords}[1]{\textbf{\textit{Keywords }} #1}
\newtheorem{thm}{Theorem}
\newtheorem{example}{Example}[section]
\newtheorem{proposition}[thm]{Proposition}
\newenvironment{acknowledgements}
    {\large\bfseries Acknowledgement%
    \par\medskip\normalfont\normalsize}%
    {}%
\newcommand{\nosemic}{\renewcommand{\@endalgocfline}{\relax}}
\newcommand{\dosemic}{\renewcommand{\@endalgocfline}{\algocf@endline}}
\newcommand{\pushline}{\Indp}
\let\oldnl\nl
\newcommand{\nonl}{\renewcommand{\nl}{\let\nl\oldnl}}
\pgfplotsset{compat=1.7}
\DeclareMathOperator*{\argmax}{arg\,max}
\DeclareMathOperator*{\conv}{conv}
\DeclareMathOperator*{\jaco}{J}
\DeclareMathOperator*{\e}{e}
\newcommand{\x}{w}
\newcommand{\w}{w}
\newcommand{\fduchi}{\widetilde f}
\newcommand{\Dduchi}{\mathcal{S}}
\newcommand{\Sp}{\bar Q_p}
\newcommand{\Qp}{Q_p}
\DeclareMathOperator*{\R}{\mathbb{R}}
\DeclareMathOperator*{\RR}{\mathbb{R}}
\DeclareMathOperator*{\PP}{\mathbb{P}}
\DeclareMathOperator*{\EE}{\mathbb{E}}
\DeclareMathOperator*{\Rd}{\mathbb{R}^d}
\DeclareMathOperator*{\Rp}{\mathbb{R}^p}
\DeclareMathOperator*{\Rq}{\mathbb{R}^q}
\newcommand{\mathbbm}[1]{\mathds{1}}
\newcommand{\jac}{\jaco\!L}
\definecolor{dkgreen}{rgb}{0,0.6,0}
\definecolor{gray}{rgb}{0.5,0.5,0.5}
\definecolor{mauve}{rgb}{0.58,0,0.82}
\tiny\color{gray},
\definecolor{deepblue}{rgb}{0,0,0.5}
\definecolor{deepred}{rgb}{0.6,0,0}
\definecolor{deepgreen}{rgb}{0,0.5,0}
\title{Superquantile-based learning:\\a direct approach using gradient-based optimization\thanks{A preliminary version of this work~\cite{laguel2020first} was presented at the IEEE MLSP conference in September 2020. This work is based on Y. Laguel's MSc. thesis defended in Summer 2018.}}
\author{
Yassine Laguel$^{1}$, Jérôme Malick$^{1}$, Zaid Harchaoui$^{2}$ \\
\small{$^{1}$Univ. Grenoble Alpes, CNRS, Grenoble INP, LJK, 38000 Grenoble, France}  \\
\small{$^{2}$University of Washington, Seattle, WA, USA}}
\begin{document}
\maketitle



\begin{abstract}
We consider a formulation of supervised learning that endows models 
with robustness to distributional shifts from training to testing. The formulation hinges upon the 
superquantile risk measure, also known as the conditional value-at-risk, 
which has shown promise in recent applications of machine learning and signal processing.
We show that, thanks to a direct smoothing of the superquantile function, a superquantile-based learning objective is amenable to gradient-based optimization, using batch optimization algorithms such as gradient descent or quasi-Newton algorithms, or using stochastic optimization algorithms such as stochastic gradient algorithms. A companion software~\texttt{SPQR} implements in Python the algorithms described and allows practitioners to experiment with superquantile-based supervised learning. 
\end{abstract}

\keywords{machine learning $\cdot$ risk measure $\cdot$ distributional robustness $\cdot$ nonsmooth optimization}

\section{Introduction: Superquantile comes into play}
\label{sec:intro}
Classical supervised learning via empirical risk (or negative log-likelihood) minimization relies on the assumption that the testing distribution coincides with the training distribution. This assumption can be challenged in domain applications of machine learning such as visual systems or dialog systems~\cite{recht2019imagenet}. Learning machines may then operate at prediction time with testing data whose distribution departs from the one of the training data. 
Recent failures of learning systems when operating in unknown environments~\cite{metz2018microsoft,knight2018selfdriving} underscore the importance of reconsidering the learning objective used to train learning machines in order to ensure robust behavior in the face of  prevalence of worst-case scenarios or unexpected distributions at prediction time. 

The generalized regression framework presented in \cite{rockafellar2008risk} provides an attractive ground to design learning machines displaying increased robustness. This framework hinges upon modeling worst-case aversion with superquantile, also known as Conditional Value-at-Risk, a statistical summary of 
the tail of the distribution considered~\cite{lee2018minimax,duchi2019variance,kuhn2019wasserstein}. The superquantile stands out as one of prominent examples of 
risk measures, well-studied in economics and finance~\cite{rockafellar2000optimization,ben2007old}. The superquantile has recently drawn an increasing attention in machine learning; see e.g.~fair learning~\cite{willaimson2019fairness}, federated learning~\cite{laguel:device}, adversarial classification~\cite{ho2020adversarial}, submodular optimization \cite{wilder2018risk}, and reinforcement learning~\cite{chow2015risk} among others.

The notion of robustness brought by the superquantile is aligned with the one in distributionally robust optimization~\cite{ben2009robust} and empirical likelihood estimation~\cite{owen2001empirical}. It is, however, different, from notions of robustness commonly considered in robust statistics~\cite[Sec. 12.6]{ben2009robust}.
The superquantile provides an efficient and mathematical-grounded adaptive re-weighting scheme of the training data, allowing one to learn predictive models with better worst-case performances that standard models obtained from empirical risk minimization. This has been corroborated empirically by a number of recent papers; see e.g.~\cite{willaimson2019fairness,laguel:device,levy2020large,curi2020adaptive,soma2020statistical}.
Recent work~\cite{duchi2018learning} established learning-theoretic generalization bounds for statistical models trained through the minimization of related objectives.

Despite attractive theoretical and practical properties, 
superquantile-based learning may be less developed than it could have been in machine learning and signal processing. This may be due to the lack of (i) direct scalable algorithms for superquantile-based optimization and (ii) easy-to-use software packages to benchmark superquantile optimization algorithms.

\vspace*{-2ex}

{
\paragraph{Contributions of this work.}
In this paper, we present a publicly-available and easy-to-use Python toolbox for superquantile-based learning, building off the popular software library\;\texttt{scikit-learn}. This paper is a follow-up of our IEEE MLSP 2020~conference paper~\cite{laguel2020first}, incorporating recent work in an extended literature review, providing additional features to the toolbox, and presenting further empirical illustrations of the robustness brought by superquantile.

More precisely, the contributions of this work are the following:
\begin{itemize}
    \item We provide a gentle introduction to superquantile-based learning. We present the main notions; we 
    review several choices of optimization algorithms; we also discuss the various numerical components used explicitly or implicitly in recent papers. These components include for instance various strategies to overcome the non-smoothness inherent to the superquantile function. 
    \smallskip
    
    \item We provide elementary analyses as well as template routines within a companion software package. 
    We primarily focus on operational aspects and give pointers to recent theoretical developments.
    
    \smallskip
    
    \item We provide numerical experiments illustrating (i) the interest of using batch quasi-Newton optimization algorithms for minimizing superquantile-based objectives and (ii) the robustness of superquantile-based models compared to the standard models obtained from empirical risk minimization.
\end{itemize}}

\vspace*{-1ex}
\paragraph{Outline of the paper.}
The outline of the paper is as follows. 
We set the stage by formalizing, in Section~\ref{sec:setting}, the framework of superquantile-based supervised learning, highlighting the three classical formulations of superquantile-based objectives. In Section~\ref{sec:oracles}, we study the differentiability of these objective functions, provide practical expressions of their (sub)gradients, together with fast procedures to compute them. In Section~\ref{sec:algos}, we overview batch and mini-batch first-order methods using these fast oracles. In Section~\ref{sec:spqr}, we provide a short presentation of the toolbox~\texttt{SPQR} 
for superquantile-based learning. {Finally, we illustrate in Section~\ref{sec:numexp} the interests of superquantile and \texttt{SPQR} for robustness in standard regression/classification tasks.}

\vspace*{-2ex}

{\paragraph{Most important related work.}
The 
introduction has already mentioned a variety of works related to superquantile, robustness, and applications in machine learning and signal processing. Finally, we highlight here the most important 
articles, in view of the contributions of this work, regarding the algorithms for superquantile optimization and the interest of superquantile in learning. 

Classical approaches for superquantile-based optimization consider convex programming techniques, including interior point algorithms; see the review of\;\cite{rockafellar2014superquantile}. The use of first-order algorithms in this context is quite recent and seems to be driven by machine learning considerations.
A key reference for our work is \cite{levy2020large}
which introduces an efficient approximated stochastic gradient algorithm for superquantile-based learning. We have implemented this algorithm within our toolbox and 
compared it with a simple approach using batch quasi-Newton method (in Section~\ref{sec:exp1}).

The interest of using superquantile in learning has been shown empirically in several recent papers, including \cite{willaimson2019fairness,laguel:device,levy2020large,curi2020adaptive,soma2020statistical}. In particular \cite{willaimson2019fairness}, studying fairness issues, empirically demonstrates that superquantile trades predictive accuracy for less fairness violation. 
In a context of federated learning, \cite{laguel:device} compares the performances of models learned 
by superquantile-based learning to standard models: for heterogeneous data, significant improvements on worst cases are reported for both error testing and accuracy on classification tasks. In our numerical experiments, we use similar representations to visualize the impact of the superquantile. Our experimental results align with those of
\cite{curi2020adaptive}, where the robustness of superquantile models on distributionally shifted datasets is demonstrated.}

\newpage
\section{Superquantile-based learning framework}
\label{sec:setting}
We are interested in a supervised machine learning setting with training data $\mathcal D = (x_i,y_i)_{1 \leq i \leq n} \in (\Rp \times \Rq)^n$, 
a prediction function $\varphi: \Rd \times \Rp \rightarrow \Rq$ (such as an additive model or a neural network) and a loss function $\ell: \Rq \times \Rq \rightarrow \R$ (such as the logistic loss or the least-squares loss).  
{Denoting\;$w\in \Rd$ the parameter (``weights") to be optimized}, the classical empirical risk minimization (ERM) problem reads
\begin{equation}\label{eq:ERM}
\min_{w \in \Rd} ~~{\frac{1}{n}\sum^n_{i=1}\ell(y_i, \varphi(w,x_i)}=\mathbb{E}_{(x,y)\sim \mathcal{D}}\left(\ell(y, \varphi(w,x))\right) ,
\end{equation}
In the above expression as expectation, we identify, by abuse of notation, the training data $\mathcal D$ with the empirical measure of the training data. With this ERM problem, we aim at achieving a small loss with an equal weighting across all training data-points. In the event that, at testing time, some probability mass gets shifted from a fraction of them onto another, large losses may then be incurred.

In order to be robust against such uncertainty in the way probability mass will spread at testing time, we can consider, instead, a training objective that involves a minimization problems with respect to a pessimistic re-weighting of the training datapoints. This boils down to replacing the expectation in \eqref{eq:ERM} by a tail-sensitive or 
\emph{risk-sensitive} quantity.
Risk-sensitive measures play a crucial role in optimization under uncertainty. Among popular convex risk measures, the superquantile, also called Conditional Value at Risk, has received particular attention because of its nice convexity properties; 
we refer to the seminal work\;\cite{rockafellar2000optimization} and the classical 
textbook~\cite[Chap.\;6]{shapiro2014lectures}. 

We use here the notation and terminology of\;\cite{rockafellar2013superquantiles}.
Consider a probability space\;$\Omega$, with probability denoted\;$\PP$. 
For any $p \in [0,1]$, the $p$-quantile of a random variable $U\colon\Omega\rightarrow\RR$, denoted by $Q_p(U)$,
is the inverse of the cumulative distribution function of $U$: for all $t\in \RR$ we have 
\begin{equation}\label{eq:Qp}
\Qp(U) \leq t \iff \PP(U \leq t) \geq p\,.
\end{equation}
The $p$-superquantile of $U$ is then defined as the mean of values of quantiles greater than a threshold $p$
\begin{equation}\label{eq:def_cvar}
\Sp(U) = \frac{1}{1-p} \int_{s=p}^1 Q_{s}(U) \mathrm{d}s\,.
\end{equation}
The analogue to \eqref{eq:Qp} for the superquantile is stronger:
\[
\Sp(U) \leq  t \iff \text{$U$ is lower than $t$ on average in its $p$-tail.}
\]
The superquantile can be therefore interpreted as a measure of the upper tail of the distribution of $U$ with the parameter $p$ controlling the sensitivity to high losses.

\begin{figure}[t!]
  \centering
  \vspace{2mm}
  \resizebox{0.5\linewidth}{!}{



\begin{tikzpicture}[
    declare function={gamma(\z)=
    2.506628274631*sqrt(1/\z)+ 0.20888568*(1/\z)^(1.5)+ 0.00870357*(1/\z)^(2.5)- (174.2106599*(1/\z)^(3.5))/25920- (715.6423511*(1/\z)^(4.5))/1244160)*exp((-ln(1/\z)-1)*\z;},
    declare function={gammapdf(\x,\k,\theta) = 1/(\theta^\k)*1/(gamma(\k))*\x^(\k-1)*exp(-\x/\theta);}
]
\pgfmathsetmacro{\mean}{4} 
\pgfmathsetmacro{\q}{5.98} 
\pgfmathsetmacro{\sq}{8.48} 
\begin{axis}[
  no markers, domain=0:9, samples=100,
  axis lines=left, xlabel=\empty, ylabel=\empty,
  every axis y label/.style={at=(current axis.above origin),anchor=east},
  every axis x label/.style={at=(current axis.right of origin),anchor=north},
  height=5cm, width=9cm,
  extra x ticks={\mean, \q},
  extra x tick labels={\empty, \empty},
  xtick={0}, ytick=\empty,
  xticklabels=\empty,
  enlargelimits=false, clip=false, axis on top
  ]

\addplot [very thick,cyan!20!black,domain=0:20] {gammapdf(x,2,2)};
\addplot [fill=white!20!white, draw=none, domain=0.:\q] {gammapdf(x,2,2)} \closedcycle;
\addplot [very thick, preaction={fill=red!70, fill opacity=0.2}, fill opacity=0.2, fill=cyan!20, draw=none, domain=\q:20] {gammapdf(x,2,2)} \closedcycle;

\draw[style=dashed] (axis cs:\mean,0.0) -- (axis cs:\mean,{gammapdf(\mean,2,2)});
\draw (axis cs:\q,0.0) -- (axis cs:\q,{gammapdf(\q,2,2)});
\draw[style=dashdotted,->, thick] (axis cs:\sq, 0.0) -- (axis cs:\sq, {gammapdf(\sq,2,2)});

\node[anchor=west] at (axis cs:{\mean}, 0.14) {$\mathbb{E}[U]$};
\node[anchor=west] at (axis cs:{\q}, 0.08) {$Q_p(U)$};
\node[anchor=west] at (axis cs:{\sq-0.5}, 0.045) {$\bar Q_p(U) = \frac{1}{1-p} \int_{s=p}^1 Q_{s}(U) \mathrm{d}s$};
\end{axis}
\end{tikzpicture}}
\vspace{-2mm}
\caption{Illustration of the expectation $\mathbb{E}(U)$, the $p$-quantile $Q_p(U)$, and the $(1\!-\!p)$-super\-quantile $\bar Q_p(U)$ of a random variable $U$.}
\label{images:superquantile}
\end{figure}
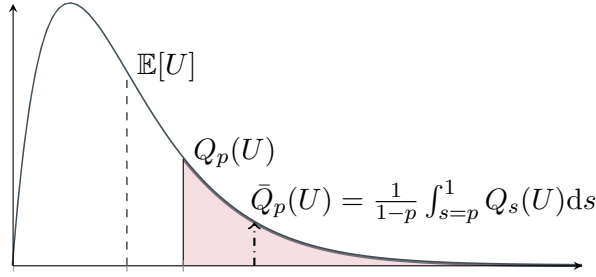 

In the case where the random variable $U$ takes equi-probable realizations $u_1,\ldots,u_n$, 
the integral \eqref{eq:def_cvar} reduces to an average of the $u_i$ that are greater or equal than the quantile. This sum can be further split in two parts with the $u_i$ that are equal to the quantile and those are that strictly larger (indexed by\;$I_>$). Mathematically, this writes
\begin{equation}\label{eq:integral_discrete}
    \Sp(U) = \frac{1}{n(1\!-\!p)}\!\! \sum_{~i \in I_>} \!\!u_i  
        + \frac{\delta}{1\!-\!p} \Qp(U)
        ~~~~~\text{with $I_> \!= \{i: u_i\!>\!\Qp(U)\}$.}
\end{equation}
where  
$
    \delta = F_U(Q_p(U))-p = \frac{1}{n} (n - |I_>|) - p.\,
$
 This expression involves the distance from $p$ to the next discontinuity point of the quantile function. Thus,
\eqref{eq:integral_discrete} provides a direct way to compute the superquantile from the computation of the quantile.

Going back to the context of learning described at the beginning of this section, we consider the superquantile of discrete distributions standing for the training data, that we denote by $[{\bar Q}_{p}]_{(x,y)\sim \mathcal{D}}$. 
A risk-sensitive statistical learning framework using the superquantile of losses rather than the expected loss thus formally replaces in \eqref{eq:ERM} the expectation by the superquantile
\begin{equation}\label{eq:ESM}
\min_{w \in \Rd} ~~f(w) = {[{\bar Q}_{p}]}_{(x,y)\sim \mathcal{D}}\big(\ell(y, \varphi(w,x))\big).
\end{equation}

This superquantile-based objective function has some special properties. First is has a nice variational formulation\;\cite{rockafellar2000optimization}:
\begin{equation}\label{eq:fminmin}
f(\w)=   
\min_{\eta \in \RR} ~\left\{\eta + \frac{1}{n(1-p)} \sum_{i=1}^n \max\{\ell(y_i, \varphi(w,x_i)) - \eta , 0\} \right\}.
\end{equation}
{This formulation opens the way to treating \eqref{eq:ESM} as a joint minimization over $(w,\eta)$; this is discussed in Section\;\ref{sec:algos}. Note here that} the minimization with respect to $\eta$ in \eqref{eq:fminmin} exactly gives the $p$-quantile of the losses and can be done efficiently in linear time. 

Using standard duality, we can also write 
the 
min problem \eqref{eq:fminmin} as a max, which takes the form
\begin{equation}\label{eq:fminmax}
f(\w)= 
\max_{q \in \Delta_n} ~\left\{\sum_{i=1}^n q_i \,\ell(y_i, \varphi(w,x_i))  : 0\leq q_i \leq \frac{1}{n(1-p)}\right\}
\end{equation}
where $\Delta_n$ denotes the probability simplex 
$\Delta_n =\{ q\in (\RR_+)^n, \sum^n_{i=1} q_i = 1\} $
Interestingly, this third formulation uncovers another interpretation of the superquantile objective.
The set of admissible probability $q_i$ in \eqref{eq:fminmax} acts as a so-called ambiguity set around the uniform probability distribution $(\frac{1}{n},\ldots,\frac{1}{n})$, relating \eqref{eq:ESM} to an instance of distributionally robust optimization: 
\eqref{eq:fminmax} considers the worst possible combination among possible re-weightings of the individual losses, with the probability distributions 
in this ambiguity set.
Superquantile-based learning is then expected to produce models that perform better in case of small distributional re-weighting between the training time and the testing time, compared to models trained using standard empirical risk minimization. 

The three above formulations \eqref{eq:ESM}
\eqref{eq:fminmin} and
\eqref{eq:fminmax} of the superquantile-based objective reveal an inherent non-smoothness. We discuss in the next section how to obtain first-order information from a superquantile-based criterion. Note, though, that training with such loss is not straightforward: replacing the expectation by the superquantile in \eqref{eq:ESM} completely changes the situation, making stochastic gradient algorithms, popular methods for solving~\eqref{eq:ERM}, which are somewhat flexible to the smoothness properties of the objective, not directly applicable; we will come back to this in Section~\ref{sec:algos}. 

Let us finally mention that the probability threshold $p$ should be considered as an hyperparameter of the superquantile-based learning problem~\eqref{eq:ESM}. The standard way to set $p$ is then to perform a cross validation over a grid of values and chose the best one with respect to a risk sensitive metric, such as e.g. the $90^{th}$ percentile of the validation loss.

{We finish this section by illustrating on a toy problem}
that super\-quantile-based learning allows one, as expected, to learn models with better worst-case performance.

\begin{example}[Superquantile-based learning gives better worst-case performance]\label{ex:num}
We consider a linear regression task on a synthetic training dataset\footnote{We take $n=10^4$ points in $\mathbb{R}^{40} \times \mathbb{R}$. The design matrix $X = (x_i)_{1\leq i\leq n}$ is generated with the \texttt{make\_low\_rank\_matrix} procedure of \textit{scikit\_learn}\;\cite{scikit-learn} with a rank $30$.} to provide a striking illustration of the benefit of superquantile-based learning in terms of worst-case performance. For a given model parameter $\bar w$, we generate the data\;according\;to
\[
 y_i = x_i^\top \bar w + \varepsilon_i
 \qquad\text{with }\varepsilon_i = \beta\varepsilon_{\mathcal{N}} +  (1-\beta)\varepsilon_{\mathcal{L}}.
\]
The noise $\varepsilon_i$ 
is generated from a mixture of two distributions:
$\varepsilon_{\mathcal{N}}$ follows a standard normal distribution, 
$\varepsilon_{\mathcal{L}}$ follows a Laplace distribution with location $\mu=10$ and scale $s=1$, 
and $\beta$ follows a Bernoulli distribution with parameter\;$0.8$. 
We solve the ordinary $\ell_2^2$-regularized least squares problem
and its superquantile counterpart:
\[
    \min_{w\in \mathbb{R}^d} \mathbb{E}_{(x, y) \sim \mathcal{D}}\big((y - w^\top x)^2\big)
    \quad\text{vs.}\quad
    \min_{w\in \mathbb{R}^d} [\bar Q_p]_{(x, y) \sim \mathcal{D}}\big((y - w^\top x)^2\big).
\]
Figure\;\ref{fig:synthetic_dataset} reports 
the distribution of losses obtained on the training dataset
and on a test dataset of $2000$ data points independently generated with the same procedure.
Thanks to the superquantile-based learning,
the upper tail of the error is shift to the left of the plot, 
which in other words means an improved performance in extreme cases. \qed

\end{example}

\begin{figure}[h!]
\vspace{-1mm}
\begin{minipage}[b]{1.0\linewidth}
  \centering
  \centerline{\includegraphics[width=9cm]{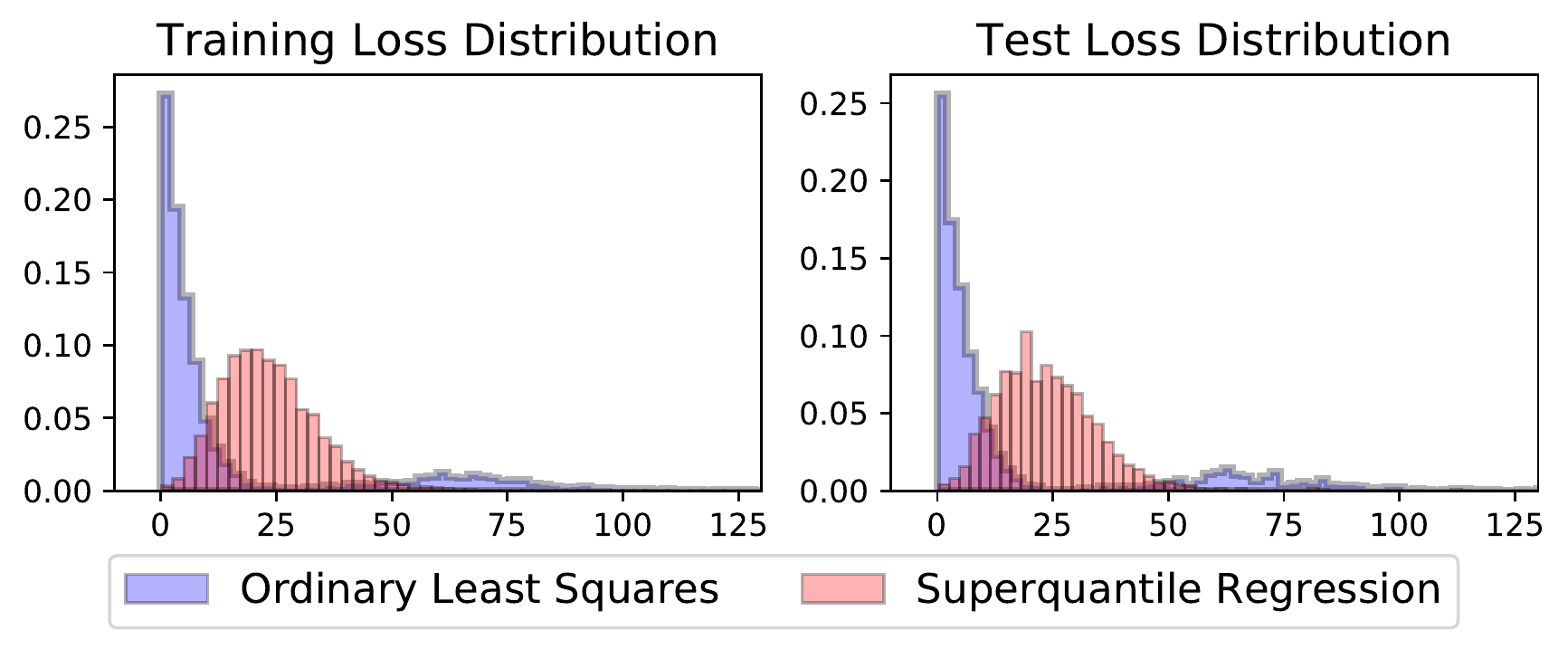}}
\end{minipage}
\vspace{-3mm}
\caption{Illustration of the reshaping of the distribution of errors resulting from superquantile-based learning (model trained with $p=0.9$).}
\label{fig:synthetic_dataset}
\end{figure}


\section{First-order oracles for superquantile function}
\label{sec:oracles}

The expression~\eqref{eq:integral_discrete} 
gives an efficient way to compute superquantiles. We have indeed a three step procedure: (i) compute the $p$-quantile with the specialized algorithm (called \texttt{quickfind}) of complexity $O(n)$ (with $n$ the number of data points); (ii) select all values greater or equal than the quantile;
(iii) average values along \eqref{eq:integral_discrete}. To minimize the superquantile-based objective~\eqref{eq:ESM}, we would also need, in addition to an objective evaluation oracle, an oracle to obtain first-order information.

In this section, we study the differentiability properties of the superquantile objective and we describe how to obtain subgradient or gradient information with the same complexity~$O(n)$ as for computing a standard quantile. We denote by 
\begin{equation}\label{eq:Li}
L^i(w) =\ell(y_i, \varphi(w,x_i))
\end{equation}
the underlying data-dependent functions 
in \eqref{eq:ERM} and \eqref{eq:ESM}. We will distinguish two cases: (a) $L^i$ convex in Section\;\ref{sec:subgrad} and (b) $L^i$ smooth in Section\;\ref{sec:grad}.

\subsection{Subgradient oracle}\label{sec:subgrad}

We assume here that the functions $L^i$ defined in \eqref{eq:Li} are convex. This is the case
when e.g.~the model $\varphi$ is linear and the loss $\ell$ is convex with respect to its second variable, as for the $l_2$-squared or the cross-entropy loss. This encompasses several situations including $p$-least-squares regression with $p\geq1$
     \[
         L^i(w) = |y_i - w^\top x_i|^p
     \]
or the logistic regression which can be written with $\hat{y}_i = 1/(1+e^{-\w^\top x_i})$ as
     \[
    L^i(w) = - y_i \log(\hat{y}_i) - (1-y_i) \log(1-\hat{y}_i). 
    \]

In this case, 
the superquantile-based function $f$ of\;\eqref{eq:ESM} is convex as well: we can see it on\;\eqref{eq:fminmax} which expresses $f$ is a max, over $q$, of convex functions in $w$. 
We give here the expression of the entire subdifferential for the convex case.
This result is not new: it is mentioned in several recent papers including\;\cite{levy2020large,curi2020adaptive}; it is part of the thorough study of~\cite{ruszczynski2006optimization} where gradients\footnote{Interestingly, the nonsmoothness of superquantile-based functions arises only with discrete distributions, as we consider here.} 
for general distributions are obtained from advanced tools. We give here a short proof using elementary convex analysis\;\cite{hiriart2013convex}. 

\begin{proposition}\label{thm:sub}
Assume 
that the $L^i$ are convex. 
Fix $\x \in \mathbb{R}^d$, compute $L(\w)\in\RR^n$ and $Q_p(L(\x))\in \RR$. 
Consider $I_>$ the set of indices such that $L^i(\x) > Q_p(L(\x))$ and $I_=$ the set of indices such that $L^i(\x) = Q_p(L(\x))$. 
Then the subdifferential at $\x$ of the convex function $f$ reads as the 
Minkowski sum
\begin{equation}\label{eq:sub}
\partial f(\x)  
~=~ \frac{1}{n(1-p)}\!\sum_{i\in I_>} \partial L^i(\x) 
~+~ \frac{\delta}{1-p} \conv\left\{ \partial L^i(\x) : i \in I_=\right\},
\end{equation}
with $\delta = \frac{1}{n} (n - |I_>|) - p$.
In particular, when the $L^i$ is differentiable at $\x$, $f$ is differentiable at\;$\x$ if and only if the set  $I_=$ is reduced to a singleton. 
\end{proposition}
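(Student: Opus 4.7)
My plan is to start from the variational formulation \eqref{eq:fminmax}, which writes $f(\w) = \max_{q \in \mathcal{Q}_p} \Phi(\w,q)$ with $\Phi(\w,q) := \sum_{i=1}^n q_i L^i(\w)$ linear in $q$ and convex in $\w$, over the compact convex ambiguity set $\mathcal{Q}_p := \{q \in \Delta_n : q_i \leq 1/[n(1-p)]\}$. Under the assumed convexity of the $L^i$, the standard max-rule of convex subdifferential calculus (see e.g.\ \cite{hiriart2013convex}) gives
\[
\partial f(\w) \;=\; \conv \bigcup_{q^* \in \mathcal{Q}_p^*(\w)}\sum_{i=1}^n q_i^*\,\partial L^i(\w),
\]
where $\mathcal{Q}_p^*(\w) := \argmax_{q \in \mathcal{Q}_p}\Phi(\w,q)$. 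The task then reduces to identifying $\mathcal{Q}_p^*(\w)$ explicitly and simplifying the right-hand side into the Minkowski-sum form \eqref{eq:sub}.

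To determine $\mathcal{Q}_p^*(\w)$, I would solve this linear program by a rearrangement / water-filling argument: since $\Phi(\w,\cdot)$ is linear and $L^i(\w)$ is strictly largest on $I_>$ and strictly smallest on $I_<$, any optimal $q^*$ must saturate the box bound on $I_>$ and vanish on $I_<$. The residual mass available for $I_=$ is then $1 - |I_>|/[n(1-p)]$, which simplifies via $\delta = (n-|I_>|)/n - p$ to exactly $\delta/(1-p)$, echoing the bookkeeping already present in \eqref{eq:integral_discrete}.

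Plugging this description of $\mathcal{Q}_p^*(\w)$ back into the Danskin-type formula, the $I_>$-piece contributes the fixed Minkowski sum $\frac{1}{n(1-p)}\sum_{i \in I_>} \partial L^i(\w)$, while the $I_=$-piece contributes $\{\sum_{i \in I_=} q_i^* g_i : g_i \in \partial L^i(\w)\}$ as $q^*$ ranges over admissible vectors with total $I_=$-mass $\delta/(1-p)$. Reparameterizing $q_i^* = (\delta/(1-p))\alpha_i$ with $\alpha_i \geq 0$ and $\sum_i \alpha_i = 1$, and using convexity of each $\partial L^i(\w)$ to absorb the $g_i$-selection into the convex combination, this second piece rewrites as the scaled convex hull $\frac{\delta}{1-p}\conv\{\partial L^i(\w) : i \in I_=\}$, yielding \eqref{eq:sub}. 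The differentiability corollary is then immediate: when each $L^i$ is differentiable at $\w$, every $\partial L^i(\w)$ is a singleton, and the right-hand side of \eqref{eq:sub} collapses to a singleton if and only if $|I_=| = 1$.

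The main obstacle is not any single deep step but the careful LP analysis of $\mathcal{Q}_p^*(\w)$ and the bookkeeping that turns $n(1-p) - |I_>|$ into the clean scaling factor $\delta/(1-p)$ attached to the $I_=$-piece; the remainder is routine convex subdifferential calculus, in line with the paper's claim of an \emph{elementary} proof drawing on \cite{hiriart2013convex}.
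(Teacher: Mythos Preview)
Your route is valid but genuinely different from the paper's. The paper argues from the \emph{min} formulation \eqref{eq:fminmin}: it computes $\partial h_i(w,\eta)$ for $h_i=\max\{L^i(w)-\eta,0\}$, sums to obtain $\partial h(w,\eta)$, and then invokes the marginal-function rule (Corollary~4.5.3 in \cite{hiriart2013convex}) on $f(w)=\min_\eta h(w,\eta)$, the optimal $\eta$ being $Q_p(L(w))$. You instead argue from the \emph{max} formulation \eqref{eq:fminmax} via a Danskin-type rule together with an explicit LP description of the maximizing face $\mathcal Q_p^*(w)$. Your approach is arguably more transparent: the water-filling structure you exploit is exactly the combinatorics already recorded in \eqref{eq:integral_discrete}, and no auxiliary variable $\eta$ needs to be introduced and then eliminated. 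The paper's route, on the other hand, stays closer to the representation \eqref{eq:fminmin} that is reused elsewhere (e.g.\ for the joint-minimization algorithms of Section~\ref{sec:algos}).

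One step to tighten: when you reparameterize $q_i^*=\tfrac{\delta}{1-p}\alpha_i$ on $I_=$, you keep $\alpha_i\ge0$ and $\sum_i\alpha_i=1$ but silently drop the box bound $q_i^*\le 1/[n(1-p)]$, which becomes $\alpha_i\le 1/(n\delta)$. Since one always has $n\delta<|I_=|$, this constraint is vacuous whenever $|I_=|=1$ (so the differentiability corollary is unaffected), and more generally whenever $n\delta\le1$; but if several losses tie at the quantile and $n\delta>1$, the constraint can bind and the $I_=$-contribution is only a \emph{subset} of $\tfrac{\delta}{1-p}\conv\{\partial L^i(w):i\in I_=\}$, so your argument as written yields $\subseteq$ rather than equality with the set in \eqref{eq:sub}. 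The paper's own proof hides the same passage behind the words ``after simplification,'' so the issue is not specific to your approach; still, you should be aware that the clean convex-hull form in \eqref{eq:sub} implicitly relies on this restriction.
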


\begin{proof}
The proof simply consists in applying convex calculus rules; the reader may find them in ~\cite[Chap\;D]{hiriart2013convex}. First 
we apply
Theorems\;4.1.1 and\;4.4.2 
to $h_i(\x, \eta)=\max\{L^i(\x) - \eta, 0\})$ to get
\begin{equation*}
    \partial h_i(x, \eta) = \{(\partial L^i(\x), -1) (\mathbbm{1}_{L^i(\x) > \eta} + \alpha \mathbbm{1}_{L^i(\x) = \eta}),\; \alpha \in [0, 1]\}
\end{equation*}
We apply
Theorem\;4.1.1 
with $h(\x, \eta) =\eta + \frac{1}{(1-p)n} \sum_{i=1}^n h_i(\x, \eta)$
\begin{equation*}
\begin{split}
    \partial h(\x, \eta) =  &\left\{ \left(\frac{1}{(1-p)n} \sum_{i=1}^n \partial L^i(\x) \delta^i(w,\alpha),
    \right. \right.\\
    &  \left. \left. 1 - \frac{1}{(1-p)n} \sum_{i=1}^n \delta^i(w,\alpha) \right), \alpha_i \in [0,1], \forall i \right\}.
\end{split}
\end{equation*}
with $\delta^i(w,\alpha) = (\mathbbm{1}_{L^i(\x) >  Q_p(L(\x))} + \alpha_i \mathbbm{1}_{L^i(\x) =  Q_p(L(\x))})$.
We finish with writing $f(\x)\!=\!\min_{\eta \in \mathbb{R}} h(\x, \eta)$ from \eqref{eq:fminmin}.
We can thus apply Corollary\;4.5.3 to get \eqref{eq:sub} after simplification.
\qed
\end{proof}

This proposition thus tells us that the computation of a subgradient can be performed in linear time from the subgradients $g_i\in \partial L^i(\x)$ for $i\in I_>\cup I_=$: the computing cost essentially stems from the computation of the $p$-quantile of the losses $L^{i}(\x)$ and the sum of vectors in $\RR^d$.

\subsection{Gradient oracle (for smoothed approximation)}\label{sec:grad}

We assume in this section that the functions $L^i$ defined by \eqref{eq:Li} are smooth, which holds locally when both the model $\varphi$ and the loss $\ell$ are smooth. Unfortunately, the superquantile breaks the smoothness (see e.g. Proposition~\ref{thm:sub} with smooth convex functions $L^i$), so that superquantile-based function $f$ is usually nonsmooth.

We propose here to smooth $f$ using infimal convolution as in~\cite{nesterov2005smooth}. More precisely, we follow the methodology of~\cite{doi:10.1137/100818327} and we propose to smooth only the superquantile $\bar Q_p$ rather than the whole function\;$f$. Given the formulation \eqref{eq:fminmax}, we consider the function $f_\mu$ for $\mu>0$, as the composition of the $L^i$ by the infimal convolution smoothing of $\bar Q_p$ 
\begin{equation}\label{eq:def_f_mu}
    f_{\mu}(\x) = \max_{q \in \Delta_n, q_i\leq \tfrac{1}{n(1-p)}} \sum_{i=1}^n q_i \; L^i(\x)
    - \mu\ d(q) \qquad\text{} 
\end{equation}
where $d:\mathbb{R}^n \rightarrow \mathbb{R}$ is a fixed non-negative strongly convex function. 
As a direct application of~\cite[Th.\;1]{nesterov2005smooth}, we have the following proposition establishing that $f_\mu$ is a smooth approximation of $f$.

\begin{proposition}[Gradient of smoothed approximation]\label{thm:smooth_approx}
Assume that 
the $L^i$ are smooth for any $i$. 
Then, the function $f_\mu$ of \eqref{eq:def_f_mu} provides a global approximation of $f$, i.e.
$f_\mu(\x) \leq f(\x) \leq  f_\mu(\x) + \frac{\mu}{2}$ for any $\x \in \mathbb{R}^d$.
If $L$ is differentiable, then $f_\mu$ is differentiable as well, with 
\begin{equation}\label{eq:grad_phi_mu}
\nabla f_\mu(\x) = \jac(\x)^T q_\mu(\x),
\end{equation}
where $\jac(\x)$ is the Jacobian of $L$ at $\x$ and $q_\mu(\x)$ is the optimal solution of~\eqref{eq:def_f_mu}, unique by
strong convexity of $d$.
\end{proposition}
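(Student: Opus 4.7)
The plan is to deduce both claims directly from Nesterov's smoothing theorem, as the proposition announces; my job reduces to identifying the right ingredients and then doing the bookkeeping. The saddle-point representation~\eqref{eq:fminmax} gives $f(w) = \max_{q\in Q} \langle q, L(w)\rangle$, where $Q := \{q\in \Delta_n : q_i \leq 1/(n(1-p))\}$ is a non-empty compact convex subset of the simplex; the smooth map $L:\mathbb{R}^d \to \mathbb{R}^n$ plays the role of the linear coupling between primal and dual variables, $Q$ is the dual feasible set, and $d$ is a prox-function on it. Strong convexity of $d$ together with compactness of $Q$ guarantee that the maximizer $q_\mu(w)$ in~\eqref{eq:def_f_mu} is well defined and unique for every $w$.

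For the sandwich $f_\mu \leq f \leq f_\mu + \mu/2$ I would give a two-line argument. Setting $D := \max_{q\in Q} d(q)$ and using $d\geq 0$, one inequality is immediate:
\begin{equation*}
f_\mu(w) = \max_{q\in Q}\bigl(\langle q, L(w)\rangle - \mu d(q)\bigr) \leq \max_{q\in Q} \langle q, L(w)\rangle = f(w).
\end{equation*}
The reverse bound is obtained by picking $q^\star \in \argmax_{q\in Q} \langle q, L(w)\rangle$ and writing
\begin{equation*}
f(w) = \langle q^\star, L(w)\rangle \leq \bigl(\langle q^\star, L(w)\rangle - \mu d(q^\star)\bigr) + \mu D \leq f_\mu(w) + \mu D.
\end{equation*}
Getting the stated constant $1/2$ then reduces to the normalization $D\leq 1/2$; this is not universal but holds for the concrete prox-functions put into the companion toolbox (for instance a suitably scaled Euclidean distance or negative entropy on $Q$).

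For the gradient formula I would factor $f_\mu = g\circ L$ with $g(u) := \max_{q\in Q}\bigl(\langle q, u\rangle - \mu d(q)\bigr)$. Up to a sign, $g$ is the Legendre--Fenchel conjugate of the $\mu$-strongly convex function $\mu d + \iota_Q$; the classical duality between strong convexity and Lipschitz smoothness of the conjugate (equivalently, Danskin's theorem applied to a strictly concave maximand on a compact set) then ensures that $g$ is continuously differentiable on $\mathbb{R}^n$ with $\nabla g(u) = q_\mu(u)$, where $q_\mu(u)$ is the unique maximizer. Smoothness of $L$ lets the chain rule conclude: $\nabla f_\mu(w) = \jac(w)^T \nabla g(L(w)) = \jac(w)^T q_\mu(w)$, which is exactly~\eqref{eq:grad_phi_mu}.

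The main obstacle is not analytic but bookkeeping: once Nesterov's framework is set up, everything follows cleanly. The one point deserving care is the universal-looking constant $1/2$ in the sandwich, which reflects a normalization choice of $d$ and should be verified for each prox-function used; uniqueness and continuous dependence $w\mapsto q_\mu(w)$, needed to upgrade the pointwise gradient identity into a genuine gradient via the chain rule, are automatic from strong convexity of $d$ and compactness of $Q$.
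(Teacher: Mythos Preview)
Your proposal is correct and matches the paper's approach: the paper does not give an independent proof but simply invokes \cite[Th.~1]{nesterov2005smooth}, and your argument is precisely an unpacking of that result (the sandwich from $d\geq 0$ and $d\leq D$, differentiability of the inner max from strong convexity, plus the chain rule to accommodate the nonlinear map $L$). Your caveat about the constant $\tfrac{1}{2}$ is well placed: Nesterov's generic bound is $f_\mu \leq f \leq f_\mu + \mu D$ with $D=\max_{q\in Q}d(q)$, so the $\tfrac{\mu}{2}$ in the statement reflects an implicit normalization of $d$ rather than a universal constant.
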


In practice, the previous result requires an efficient subroutine solving~\eqref{eq:def_f_mu}.
Here, we consider the euclidean distance to the uniform probability measure
\begin{equation}\label{eq:div}
d(q) = \sum_{i=1}^n \big(q_i - \tfrac{1}{n}\big)^2.
\end{equation}
For this distance, Algorithm\;\ref{algo_projection} provides an efficient procedure for solving \eqref{eq:def_f_mu}.
The procedure follows the one in~\cite{condat2016fast}, where convex duality and one-dimensional search ideas are fruitfully combined. 
Thanks to the particular smoothing distance $d$, computing \eqref{eq:def_f_mu} by duality is equivalent to finding the zero of a non-decreasing, piecewise affine and continuous function (the derivative of the dual function), which has an explicit expression after sorting the kinky points. We formalize this in Algorithm \ref{algo_projection} and the proposition below. 

\begin{algorithm}[t!]
\KwInit{$e=(1, \dots,1)^\top$, ~~$u= L(\x) + \frac{\mu}{n} \e$, ~~$\ell = \frac{1}{n(1-p)}$,~~$q_\mu = 0\in \mathbb{R}^n$ 
}
\nosemic Find in the points of non-differentiability $\mathcal{P}$, $a$ and $b$ such that, \;
\pushline\dosemic\nonl  $\mathcal{P}:= \{u_i, u_i -  \mu \ell, i \in \{1,\dots,n\}\}$ \\
\dosemic\nonl $a := \max \left\{s \in \mathcal{P},\theta'(s) \leq 0 \right\}$ \\
\dosemic\nonl $b := \min \left\{s \in \mathcal{P}, \theta'(s) > 0 \right\}$ \;
Set the dual optimal solution $\lambda := a - \frac{\theta'(a)(b-a)}{\theta'(b) - \theta'(a)}$\;
    %
Construct the primal solution component-wise: \\
\For{$1 \leq k \leq n$}{
  \uIf{$\lambda < u_k - \mu \ell$}{
    ${[q_\mu]}_k = \ell$\;
  }
  \uElseIf{$u_k - \mu \ell \leq \lambda < u_k$} {
    ${[q_\mu]}_k = \frac{u_k - \lambda}{\mu}$\;
  }
  \Else{
    ${[q_\mu]}_k = 0$
  }
  }
\KwOut{$q_\mu \in \mathbb{R}^n$ : solution of \eqref{eq:def_f_mu}}
  
\caption{Fast subroutine for smoothed oracle}
\label{algo_projection}
\end{algorithm}

\begin{proposition}\label{thm:On}
    Algorithm \ref{algo_projection} computes the optimal solution of the problem \eqref{eq:def_f_mu} with $d$ as \eqref{eq:div} at a cost of $O(n)$ operations.
\end{proposition}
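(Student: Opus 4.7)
The plan is to derive the algorithm from Lagrangian duality on the single equality constraint $\sum_i q_i = 1$, leaving the box constraints $0\le q_i\le \ell$ (with $\ell = 1/(n(1-p))$) in the inner problem. Dualizing the equality with multiplier $\lambda\in\RR$ decouples the inner maximization into $n$ one-dimensional strongly concave quadratic problems on $[0,\ell]$. First-order optimality under the box constraint yields an explicit piecewise-affine, non-increasing solution $q_i(\lambda)$ with two breakpoints at $u_i-\mu\ell$ and $u_i$ (with $u_i$ as defined in the initialization of Algorithm~\ref{algo_projection}): $q_i(\lambda)=\ell$ for $\lambda<u_i-\mu\ell$, $q_i(\lambda)=(u_i-\lambda)/\mu$ on $[u_i-\mu\ell,u_i)$, and $q_i(\lambda)=0$ for $\lambda\ge u_i$. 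This matches exactly the primal reconstruction step in the algorithm, so all that remains is to show that the algorithm identifies the correct $\lambda^\star$.

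Define $\theta(\lambda)$ as the dual objective; its derivative is $\theta'(\lambda)=1-\sum_i q_i(\lambda)$, which is continuous, piecewise affine and non-decreasing in $\lambda$, with kinks exactly at the $2n$ points of $\mathcal{P}=\{u_i, u_i-\mu\ell:i=1,\dots,n\}$. Strong duality holds (Slater's condition is trivially satisfied since $q_i = 1/n$ lies in the relative interior of the constraint set), so the optimal $\lambda^\star$ is characterized by $\theta'(\lambda^\star)=0$. By monotonicity and continuity of $\theta'$, the zero lies in the interval $[a,b]$ where $a$ and $b$ are the consecutive kinks in $\mathcal{P}$ defined in the algorithm; on this interval $\theta'$ is affine, so the zero is recovered by the linear interpolation $\lambda^\star=a-\theta'(a)(b-a)/(\theta'(b)-\theta'(a))$. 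Substituting $\lambda^\star$ into the component-wise formula gives the primal optimum $q_\mu$, which is unique since $d$ is strongly convex, establishing correctness.

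For the complexity, the task reduces to locating $a$ and $b$ in $\mathcal{P}$, i.e.\ finding the largest element of $\mathcal{P}$ at which $\theta'\le 0$ without sorting. The plan is to invoke the linear-time dual bracketing of~\cite{condat2016fast}: using a \texttt{quickselect}-style median pivot on $\mathcal{P}$, evaluating $\theta'$ at the pivot in $O(n)$ time via the explicit sum $\sum_i q_i(\mathrm{pivot})$, and recursing on the appropriate half according to the sign of $\theta'$ at the pivot (which is sound by monotonicity of $\theta'$). With a median-of-medians pivot rule this yields a deterministic $O(n)$ bound, and the remaining work—two extra evaluations of $\theta'$, the interpolation, and the component-wise reconstruction of $q_\mu$—is $O(n)$ as well. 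The main obstacle in the write-up is precisely this last step: the naive implementation sorts $\mathcal{P}$ and yields only $O(n\log n)$, so the $O(n)$ bound hinges on carefully importing the linear-time selection argument of Condat and checking that the monotonicity of $\theta'$ makes the recursion on half the points legitimate at each level.
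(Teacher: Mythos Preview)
Your proposal is correct and follows essentially the same route as the paper: dualize only the simplex equality, solve the decoupled box-constrained quadratics to obtain the piecewise-affine $q_i(\lambda)$, characterize $\lambda^\star$ as the zero of the piecewise-affine non-decreasing $\theta'$, and recover it by linear interpolation between the bracketing kinks $a,b\in\mathcal P$. If anything, your complexity discussion is more careful than the paper's, which simply asserts that locating $a$ and $b$ costs $O(n)$ without spelling out the selection-style argument you import from~\cite{condat2016fast}.
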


\begin{proof}
We dualize the constraint $\sum_{{i=1}}^n q_i - 1 = 0$ to get the Lagrangian
\begin{equation*}
\mathscr{L}(q,\lambda) = 
\sum_{{i=1}}^n q_i L^i(\x) - \frac{\mu}{2} \sum_{{i=1}}^n \left(q_i - \frac{1}{n}\right)^2 + \lambda \left(1 - \sum_{{i=1}}^n q_i\right).
\end{equation*}
With $\ell$ and $u$ introduced in the algorithm, the dual function writes:
\begin{equation*}
\theta(\lambda) = \max_{\substack{q \in \mathbb{R}^n \\ 0 \leq q_i \leq l}} \mathscr{L}(q,\lambda) =  \lambda - \frac{\mu}{2n} + \sum_{{i=1}}^n \max_{{0 \leq q_i \leq l}} (u_i - \lambda)q_i - \frac{\mu}{2} q_i^2 
\end{equation*}
For $\lambda \in \mathbb{R}$ and $i \in \{1,\dots, n\}$ fixed, let us introduce the function $h_i(q_i) = (u_i - \lambda)q_i - \frac{\mu}{2} q_i^2$. 
Then, we get
\begin{equation}\label{eq:sol_q_i}
\begin{split}
    \argmax_{{0 \leq q_i \leq l}}h_i(q_i) &=  \left\{
    \begin{array}{lll}
        0 &\mbox{ if } \lambda \geq u_i \\
        \frac{u_i - \lambda}{\mu} &\mbox{ if } u_i \geq \lambda \geq u_i - \mu \ell \\
        \ell  &\mbox{ if } \lambda \leq u_i - \mu \ell  \\
    \end{array}
    \right.        
\end{split}
\end{equation}
As a result, we get the explicit expression of $\theta(\lambda)$. Observing that it is differentiable, we get
\begin{equation*}
    \theta'(\lambda) = 1 - \sum_{{i=1}}^n \left(\frac{u_i - \lambda}{\mu} \mathbbm{1}_{u_i \geq \lambda \geq u_i - \mu \ell} + \ell \mathbbm{1}_{u_i - \mu \ell > \lambda}\right).
\end{equation*}
Observe that $\lim_{{\lambda \rightarrow + \infty}} \theta'(\lambda) = 1$ and since $n\ell =\frac{1}{1-p} > 1$,  $\lim_{{\lambda \rightarrow - \infty}} \theta'(\lambda) < 0$. Therefore, $\theta'$ is a non-decreasing and continuous (piecewise affine) function that takes negative and positive values: by the intermediate value theorem, there exists a solution $\lambda^\star \in \mathbb{R}$ such that $\theta'(\lambda^\star) = 0$. By duality theory, the associated $q^\star$ (the optimal solution of\;\eqref{eq:sol_q_i} for $\lambda= \lambda^\star$) is the solution of the primal problem\;\eqref{eq:def_f_mu}.
Finally, we compute $\lambda^\star$ zeroing $\theta'$. Since $\theta'$ is piecewise affine, we just need to evaluate  $\theta'$ at points belonging to the set $\mathcal{P}$ and at $a$ and $b$ as defined in Algorithm\;\ref{algo_projection}. 
Thus we have $\lambda^* = a - \frac{\theta'(a)(b-a)}{\theta'(b) - \theta'(a)}$.
Regarding computational costs, this algorithm boils down to the search of $a$ and $b$, and the assignment of the coordinates of $q_\mu$. This also sums up to a $\mathcal{O}(n)$ cost.\qed
\end{proof}

%

Combining Propositions\;\ref{thm:smooth_approx} and\;\ref{thm:On} provides a gradient oracle for the smoothed approximation $f_\mu$. For a given $w\in \RR^d$, we run Algorithm~\ref{algo_projection} to get $q_\mu(w)$; we select the indexes $i$ of non-zeros entries of $q_\mu(w)$; 
and from the oracles of $L^i$ we get
\[
    f_\mu(w) = \!\!\!\sum_{i: {q_\mu(w)}_i \!\neq 0} \!\!\big(q_\mu(w)\big)_{\!i} \,L^i(w)~~~\text{and}~~~ 
    \nabla f_\mu (w) = \!\!\!\sum_{i: {q_\mu(w)}_i \!\neq 0}\!\! \big(q_\mu(w)\big)_{\!i} \nabla L^i(w).
\]

{We finish this section 
by a short discussion on the two extreme cases for the smoothing parameters : $\mu$ close to $0$ and $\mu$ very large. Small $\mu\sim0$ imply exploding entries of $q_\mu(w)$ (see line 8 in Algorithm\;\ref{algo_projection}) and then instability of $\nabla f_\mu (w)$. Large $\mu\sim+\infty$ imply 
$q_\mu(w) = (1/n,\ldots,1/n)$ constant (see line 6 in Algorithm\;\ref{algo_projection}) and therefore the smoothed function $f_\mu (w)$ and its gradient $\nabla f_\mu (w)$ coincide with the function and gradient of the corresponding ERM objective. We illustrate these two extreme cases in Section\;\ref{sec:numexp}.}

\section{First-order optimization for superquantile-based learning}
\label{sec:algos}

Minimization of superquantile-based objectives comes with a number of technical challenges on the structure of the problem tackled, the size of the dataset or the non-smoothness of the objective. 
Standard works on minimizing superquantiles considered linear programming
or convex programming techniques, including interior point algorithms; see the review of\;\cite{rockafellar2014superquantile}. Perhaps surprisingly, the use of first-order algorithms for superquantile-based optimization is quite recent and seems to have been driven by domain applications of machine learning.

In this section, we provide an overview of the range of first-order methods to minimize superquantile-based objective functions expressed as \eqref{eq:ESM}, \eqref{eq:fminmin}, or\;\eqref{eq:fminmax}. Our discussion focuses on practical considerations; we give pointers to references presenting more details and theoretical analysis {(in particular, convergence results and convergence rates if any)}. 

\subsection{Batch algorithms}

As explained in Section\;\ref{sec:oracles}, computing the function values and (sub)gradients of the superquantile-based function $f$ in \eqref{eq:ESM} (or its smoothed counterpart $f_\mu$) requires sorting loss values on the whole data set, which is not directly amenable to classical stochastic gradient algorithms. This rehabilitates batch optimization algorithms, at least for small to medium datasets. Thus the first approach for minimizing the superquantile-based objective functions is to use standard subgradient-based methods (subgradient and dual averaging) or gradient-based methods (gradient, accelerated gradient, Quasi-Newton). This is essentially what we described in~\cite{laguel2020first}, and it is the first set of methods available in our toolbox. More precisely, we have two cases:
\begin{itemize}
    \item \textit{Convex case.} If $w \mapsto \ell(y_i, \varphi(w,x_i))$ are convex, then $f$ is convex and we have a subgradient oracle (from Proposition\;\ref{thm:sub}) enjoying the same complexity as the one for computing a quantile. We can use standard convex nonsmooth optimization methods, such as subgradient methods and dual averaging.  We implement in particular the ``weighted'' version of the latter with a Euclidean prox-function~\cite[Eq.\;2.22]{nesterov2009primal}. These algorithms satisfy ergodic convergence guarantees in objective values~\cite{bertsekas2015convex}. 
    

    \medskip
    \item \textit{Smooth case.} If $w \mapsto \ell(y_i, \varphi(w,x_i))$ are differentiable, then 
    we have a gradient oracle of the smooth approximation $f_\mu$ (from Proposition\;\ref{thm:On}), again with a $\mathcal{O}(n)$ complexity. We can use standard methods for smooth optimization: gradient method, accelerated gradient method, and quasi-Newton\;(L-BFGS). If furthermore we have convexity, these algorithms satisfy convergence guarantees in objective values~\cite{bertsekas2015convex,bertsekas2016nonlinear}.

\end{itemize}

For small to medium-size dataset, such batch methods are shown to be simple and efficient; see forthcoming Section~\ref{sec:exp1}. For large-scale problems though, the oracles become too costly as they require sorting loss values on the whole data set. We turn to the other formulations to introduce stochastic and mini-batch algorithms, that usually are the methods of choice for the case of standard learning using empirical risk minimization.

\subsection{Mini-batch algorithms}

From the perspective of the formulation \eqref{eq:fminmin} of the objective, the superquantile-based learning problem writes 
\begin{equation}\label{eq:minmin}
\min_{w \in \Rd}~
\min_{\eta \in \RR} ~\left\{ \frac{1}{n(1-p)} \sum_{i=1}^n \max\{\ell(y_i, \varphi(w,x_i)) - \eta , 0\} + \eta \right\}.
\end{equation}
When the loss is assumed to be smooth, one may again smooth the inner $\max\{\cdot,0\}$ term to get a smooth approximation of this joint objective.
One can then perform a joint minimization\footnote{Such approach is well-suited to problems with a particular decomposable structure such as non-anticipativity constraints in multi-stage programming problems; see~\cite{rockafellar2018solving}.}  with respect to the model $\x$ and the dual variable $\eta$. In other words, superquantile learning reduces to a standard empirical risk minimization
with a modified loss function truncated by the max-term. In practice, batch methods may not be interesting here, since they would not leverage the fact that the minimization over $\eta$ can be performed explicitly. 
Thus \cite{laguel:device} proposes, in a context of federated learning, to rather perform independent minimization over $w$ and $\eta$ alternatively. 
In general, this min-min approach \eqref{eq:minmin} paves the way to stochastic and mini-batch algorithms. 

Several works, including \cite{soma2020statistical} and \cite{willaimson2019fairness} (as well as \cite{fan2017learning} without mentioning superquantile),
use successfully standard stochastic optimization algorithms on this modified objective. 
Observe though that, if a mini-batch of data is sampled uniformly at random from the data, only a fraction $(1-p)$ carry (sub)gradient information. 
Furthermore, the (sub)gradients of these examples 
are scaled by $\frac{1}{1-p}$, leading to exploding directions. Thus mini-batch estimates of (sub)gradients of superquantile-based objectives may suffer from high variance. 
A solution proposed by \cite{curi2020adaptive} is to perform an adaptive sampling rather than a uniform one. This algorithm 
gradually adjusts its sampling distribution to increasingly sample tail events, until it eventually minimizes the superquantile. This approach has a nice two-player interpretation related to the third formulation, recalled\;below.

The third expression \eqref{eq:fminmax} of $f$ leads to the following formulation 
(or, as previously, its smoothed counterpart with a quadratic term on $q$ as in \eqref{eq:def_f_mu})
\begin{equation}\label{eq:minmax}
\min_{w \in \Rd}~
\max_{q \in \Delta_n} ~\left\{\sum_{i=1}^n q_i \,\ell(y_i, \varphi(w,x_i))  : 0\leq q_i \leq \frac{1}{n(1-p)}\right\}.
\end{equation}
This min-max formulation offers several ways to solve the superquantile-based learning. A first approach would consist in considering it as a generic saddle point problem and using standard (extra-)gradient algorithms or recent extensions exploiting some aspects of the problem
(see e.g.\;\cite{luo2020stochastic} for a variance-reduced min-max with strongly concave max). In our specific case, computing the max can be done systematically by a greedy algorithm with linear time complexity (see Section\;\ref{sec:oracles}). This key feature is exploited by the stochastic algorithm of \cite{duchi2019variance}, and also by the one of \cite{kawaguchi2020ordered} without relating it to superquantile. This algorithm uses a biased sampling approximation to $f$ or $f_\mu$ which has nice 
guarantees. We briefly describe below this approach.

We sample a mini-batch of $\Dduchi$ uniformly in $\mathcal{D}$ and we consider the restriction 
\begin{eqnarray*}
\fduchi(w) &=& {[{\bar Q}_{p}]}_{(x,y)\sim \mathcal{S}}\big(\ell(y, \varphi(w,x)\big)\\
&=& \max_{q \in \Delta_n} ~\left\{\sum_{i\in \Dduchi} q_i \,\ell(y_i, \varphi(w,x_i))  : 0\leq q_i \leq \frac{1}{n(1-p)}\right\}.
\end{eqnarray*}
We can now use the (sub)gradient oracles of Section\;\ref{sec:oracles} on $\fduchi$ and apply gradient-based algorithms with biased mini-batch estimator. Indeed, even if
$\EE[\fduchi(w)] \neq f(w)$ and $\EE[\nabla \fduchi(w)] \neq \nabla f(w)$,
under standard assumptions, the bias is controlled by uniform bounds and variance bounds, which gives in turn complexity guarantees when using gradient-based algorithms; see \cite[Sec.\;3]{levy2020large}. The algorithm requires a number of gradient evaluations independent of training set size and number of parameters, making it suitable for large-scale applications. This algorithm is implemented in our toolbox {and tested in Section\;\ref{sec:numexp}.}



\section{\texttt{SPQR}: Python Toolbox for Superquantile-based learning}
\label{sec:spqr}
We provide a Python software package for superquantile-based learning; it is named \texttt{SPQR} for SuPer Quantile Risk optimization.  
The software package includes modeling tools and optimization algorithms to solve problems of the form\;\eqref{eq:ESM} with just a few lines of code. The implementation builds off basic structures of~\texttt{scikit-learn} \cite{scikit-learn} the popular python machine learning library. 
\texttt{SPQR} routines rely on just-in-time compilation~\cite{10.1145/2833157.2833162} to ensure efficient running times. The software package is publicly available at~\url{https://github.com/yassine-laguel/spqr}. We now walk the reader through the toolbox~\texttt{SPQR}. 


\subsection{Basic usage: input format and execution}
   
The user provides a dataset modeled as a couple
$(X,Y)\in \mathbb{R}^{n\times p}\times\mathbb{R}^p$ and 
a first-order oracle for the function $L^{i}$.
The dataset is stored into two numpy arrays \texttt{X} and \texttt{Y}; for instance, 
for realizations of random variables:
\begin{lstlisting}
    import numpy as np
    X = np.random.rand(100, 2)
    alpha = np.array([1., 2.])
    Y = np.dot(X, alpha) + np.random.rand(100)
\end{lstlisting}
The two python functions \texttt{L} and \texttt{L\_prime} are assumed to be functions of the triplet \texttt{(w,x,y)} where \texttt{w} is the variable and \texttt{(x,y)} a datapoint. For instance, the oracle for superquantile linear regression are the following one.
\begin{lstlisting}
    # Define the loss and its derivative
    def L(w,x,y):
        return 0.5 * np.linalg.norm(y - np.dot(x,w))**2
    def L_prime(w,x,y):
        return -1.0 * (y - np.dot(x,w)) * x 
\end{lstlisting}
Before solving\;\eqref{eq:ESM}, we instantiate the \texttt{RiskOptimizer} object 
with the 
oracles, following the standard usage of \texttt{scikit-learn}. The basic instantiation is:
\begin{lstlisting}
    from SPQR import RiskOptimizer
    # Instantiate a risk optimizer object
    optimizer = RiskOptimizer(L, L_prime)
\end{lstlisting}
\texttt{RiskOptimizer} inherits from \texttt{scikit-learn}'s estimators: we use the \texttt{fit} method to run the optimization algorithm on the provided 
data, 
to get a solution of\;\eqref{eq:ESM}.
\begin{lstlisting}
    #  Running the algorithm
    optimizer.fit(X,Y)
    sol = optimizer.solution 
\end{lstlisting}



\vspace*{-1ex}

\subsection{Advanced use: parameters and \texttt{SPQR} objects}


\paragraph{Options and parameters.} The customizable parameters are stored in a python dictionary \texttt{params} which is designed as an attribute of the\;\texttt{RiskOptimizer} class. The main parameters to tune are: the choice of the oracle, the choice of the algorithm, the safety probability level \texttt{p}, the starting point of the algorithm \texttt{w\_start}, the maximum number of iterations \texttt{max\_iter}. The user can specify some of these parameters as an input and the others will be filled with defaults values when instantiating a \texttt{RiskOptimizer}. For example:
\begin{lstlisting}
    custom_params = {'algorithm': 'dualaveraging',  # selected algorithm
        'p': 0.2 }  # safety probability level
    custom_optimizer = RiskOptimizer(loss, loss_prime, params=custom_params)
\end{lstlisting}
Some important parameters (such as the safety probability level, the algorithm chosen, or the smoothing parameter $\mu$) can be given directly to the constructor of the class \texttt{RiskOptimizer} when instantiating the object. For example:
\begin{lstlisting}
    other_custom_optimizer = RiskOptimizer(loss, loss_prime, p=0.95, algorithm='bfgs', mu=0.1)
\end{lstlisting}    

\paragraph{Oracle classes.} 
The selection of the oracle is automatically done when the user instantiate the \texttt{RiskOptimizer} object. 
{Four different oracles are implemented as python objects:
two oracles for batch methods (\texttt{OracleSubgradient} to be used when the chosen algorithm is \texttt{'subgradient'} or \texttt{'dual\_averaging} and \texttt{OracleSmoothGradient} when the chosen algorithm is \texttt{'gradient'}, \texttt{'nesterov'} or \texttt{'bfgs'}) and two mini-batch oracles 
(\texttt{OracleStochasticSubgradient} and \texttt{OracleStochasticGradient}).}

To avoid the treatment of optional parameters when instantiating an oracle, we advise to go through the instantiation of a \texttt{RiskOptimizer} first.
\begin{lstlisting}
    custom_params = {'algorithm': 'nesterov',  # selected algorithm
        'p': 0.5  # safety probability level
    }
    #  Instantiation of the Risk Optimizer
    custom_optimizer = RiskOptimizer(loss, loss_prime, params=custom_params)
    #  Recovery of the oracle
    smooth_oracle = custom_optimizer.oracle
\end{lstlisting}

\vspace*{-1ex}
\paragraph{Algorithms class}

The algorithm chosen is a parameter for the instantiation of the \texttt{RiskOptimizer} class. This parameter can either be given in the input dictionary \texttt{params} or directly to the constructor of \texttt{RiskOptimizer}. The user has the choice among \texttt{'subgradient'}, \texttt{'dual\_averaging}, \texttt{'gradient'}, \texttt{'nesterov'}, \texttt{'bfgs'} and \texttt{'sgd'}.

\begin{lstlisting}
    #  Risk Optimizer class with nesterov accelerated gradient algorithm
    custom_optimizer = RiskOptimizer(loss, loss_prime, algorithm='nesterov')
\end{lstlisting}
Each algorithm is implemented as a python class that stores the oracle, 
together with relevant parameters for the optimization process. 
The main method of this class is \texttt{run}, which is 
run when \texttt{RiskOptimizer.fit} is called. The parameters of the algorithm selected are stored in the dictionary \texttt{params} that is an input of the class \texttt{RiskOptimizer}. Hence, in a standard usage, there is no need to interact with the algorithm python object.

\section{Numerical experiments}\label{sec:numexp}
In this section, we report two types of numerical experiments:
\begin{itemize}
    \item ``Optimization" experiments in Section\;\ref{sec:exp1}. There are many algorithmic options within the toolbox \texttt{SPQR}; we provide a comparison of batch vs.\;mini-batch algorithms and a discussion on tuning the smoothness parameter. 
    \item ``Learning" experiments in Section\;\ref{sec:exp2}. The interest of using superquantile in learning has been shown empirically in several recent papers, including \cite{willaimson2019fairness,laguel:device,levy2020large,curi2020adaptive,soma2020statistical}. We provide here complementary experiments highlighting the robustness of superquantile-learnt models.
\end{itemize}
All experiments are run using~\texttt{SPQR}. The optimization algorithms are initialized at $w=0\in \Rd$. For these experiments, we use a bunch of standard datasets from the UCI repository, which scale from $352$ to $94644$ datapoints. For each dataset, categorical features were one-hot encoded so that the total number of features ranges from $3$ to $287$. 

For the experiment of Table\;\ref{tab:shift_figures}, we report the agreggated results for all the datasets. For the other experiments, we report, in the main text, the detailed results obtained with one representative dataset, and we provide, in appendix, complementary results for others datasets.

\begin{figure}[h!]
\centering
  \centerline{\includegraphics[width=5.8cm]{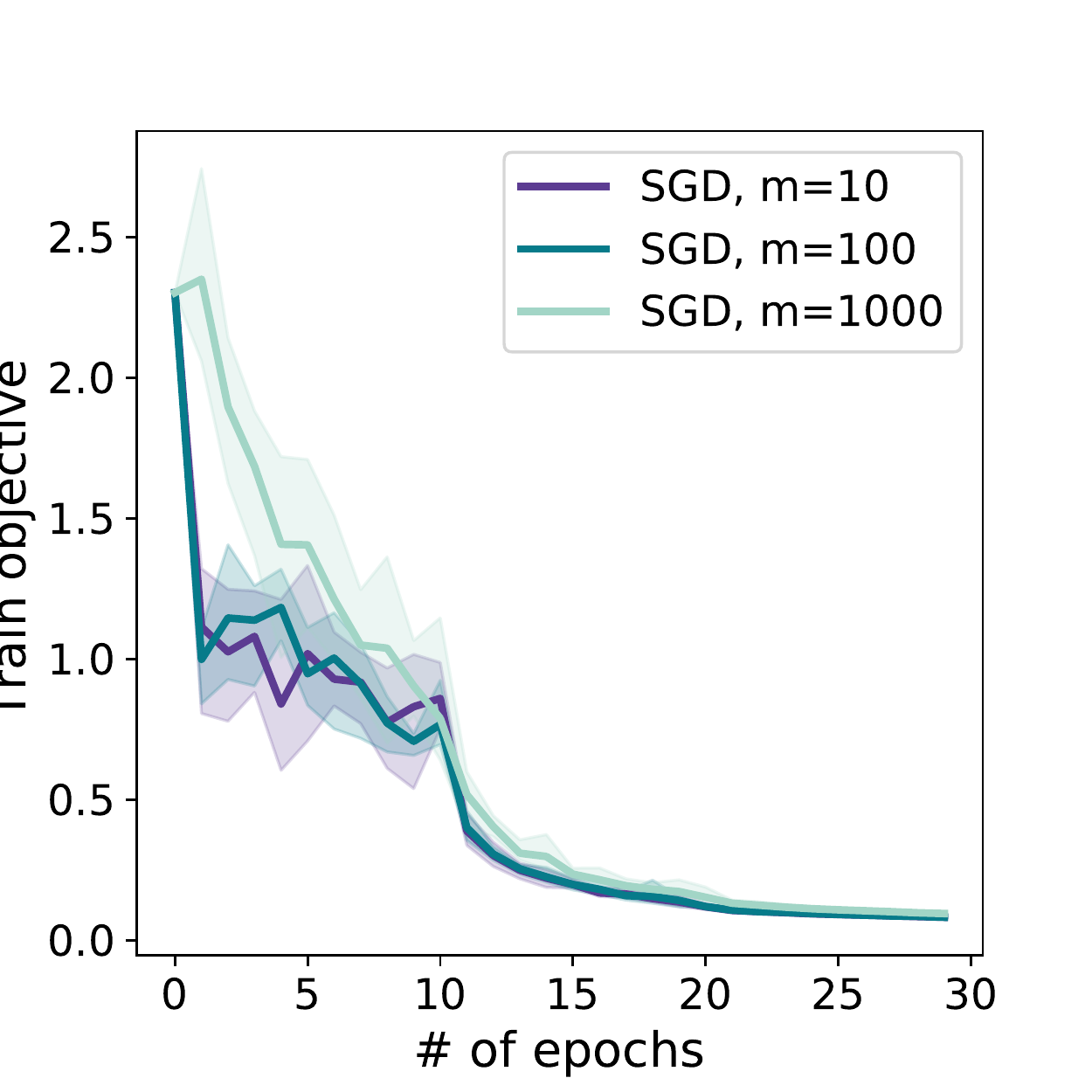}~~~\includegraphics[width=5.8cm]{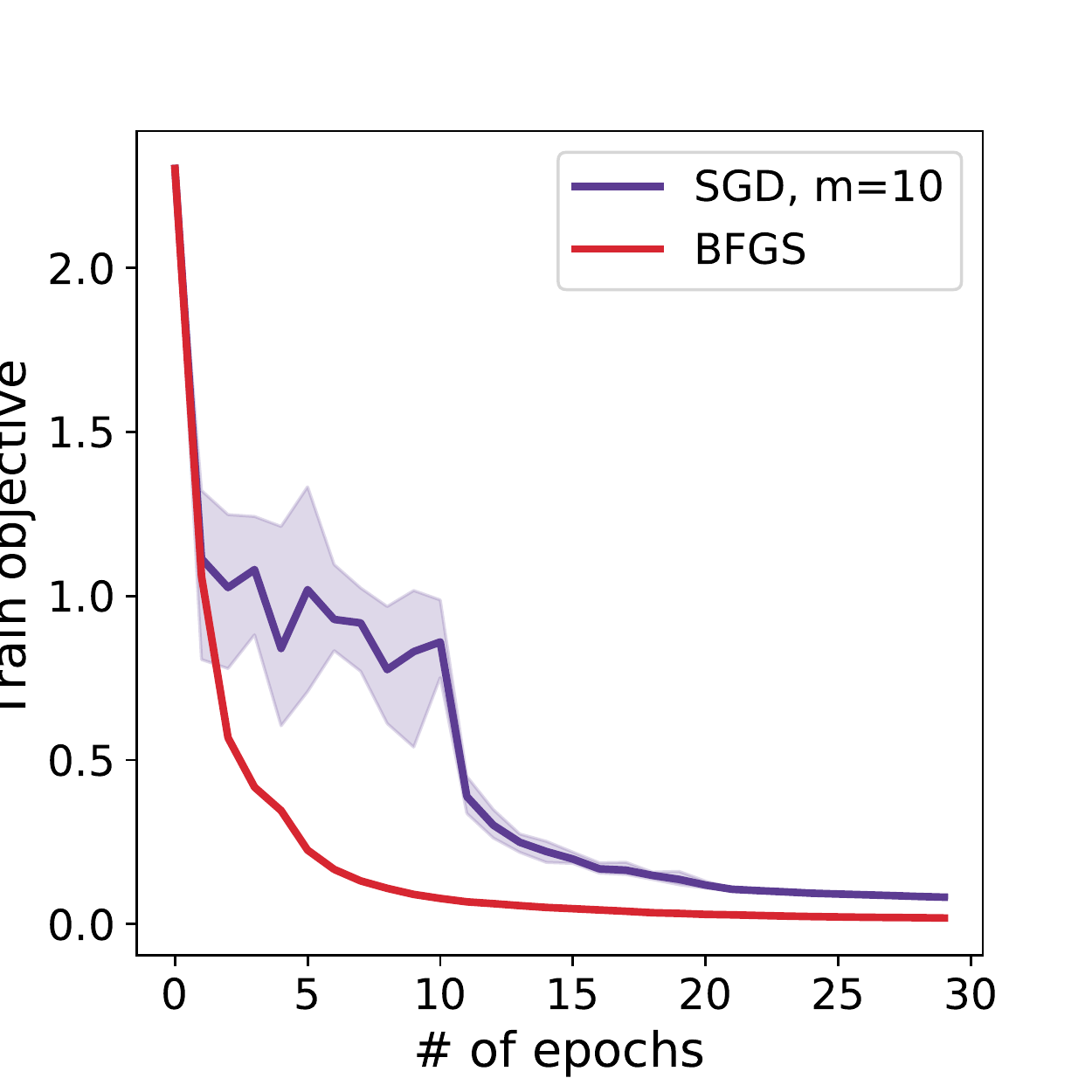}}
\caption{A comparison between batch/mini-batch algorithms in \texttt{SPQR} on a superquantile logistic regression problem with \texttt{MNIST}. Left: comparison of the runs of SGD with different batch sizes. Right: best SGD vs.\;batch quasi-Newton.
\vspace*{0ex}}
\label{fig:delta}
\end{figure}

\subsection{Solving superquantile-based learning.}\label{sec:exp1}

In this section, we illustrate two different aspects of the optimization methods available in \texttt{SPQR}. 
First, we compare the two families of algorithms available (batch vs.\;mini-batch)
showing the interest of using batch algorithms for superquantile-based learning within \texttt{scikitlearn/SPQR}. Second, 
we experiment with all the range of the smoothing parameter, advocating to avoid extreme values.

\paragraph{Batch vs.\;mini-batch.}
We compare, on a standard problem, a stochastic gradient algorithm (more precisely, SGD with momentum, denoted SGD) and a batch quasi-Newton algorithm (more precisely, low-memory BFGS \cite{nocedal2006numerical}, denoted BFGS). 

For this experiment, the set-up is similar to the one of~\cite{levy2020large}. 
We consider a supervised multi-class classification task with the superquantile multinomial logistic loss on the \texttt{MNIST} dataset. We perform feature extraction from the images using a pre-trained convolutional network similarly to~\cite{levy2020large}. For a fixed probability threshold set to $p=0.8$, we then train a linear multi-class classifier on top of the transformed data. 
For SGD, we use a momentum term of $0.9$ and we use a step decay scheme $\eta_t = \eta_0 d^{-\lfloor t/t_0 \rfloor}$, where $\eta_0$ is tuned with respect to the size of the mini-batch $m$, and where $d=0.5$ and $t_0=10$ epochs are fixed throughout all the experiments. 
For each mini-batch size $m \in \{10, 100, 1000\}$, we tune $\eta_0$ via a grid-search and take the highest initial value yielding a non-diverging sequence of iterates. In constrast with SGD, the quasi-Newton algorithm does not require specific tuning as it automatically calibrates stepsizes by line-searches at each iteration.

{On the left part of Figure~\ref{fig:delta}, we compare the performance of SGD for the different mini-batch sizes. Each color corresponds to a mini batch size $m \in \{10, 100, 1000\}$. Along iterates, the bold line represents the mean value over the five seeds of the functions and the shaded region represent the difference between the min and max values across the seeds.
We observe that there is no substantial difference among the sizes of the mini-batches: all curves show a noisy behaviour (caused by the stochastic approximation of the gradient at each step) and eventually converge to a suboptimal value. Unlike SGD, L-BFGS (right part of Figure~\ref{fig:delta}) presents a stable convergence. We observe also that a large number of epochs is necessary for SGD to catch up with BFGS for superquantile-based training. This is to be contrasted with the usually small number of epochs necessary for SGD to catch with BFGS for expectation-based training or ERM. Note that a final bias remains visible between the stochastic methods and the deterministic BFGS, as expected by the theory laid down in~\cite{levy2020large}.}



\paragraph{Impact of the smoothing parameter.} 

We consider a logistic regression on the \texttt{Australian Credit} dataset. For a sequence of smoothing parameters $\nu$ evenly spread on a log scale, we train $w_{\nu}^\star$ by solving the superquantile learning objective with L-BFGS and $p=.99$.


On Figure\;\ref{fig}, we report both the value of the smoothed $.99$-superquantile (purple) and the non-smoothed $.99$-superquantile (dashed green) at the $w_{\nu}^\star$. We also train the standard empirical risk minimizer $w^\star$ and we report both the average loss (solid black line) and the non-smoothed $.99$-superquantile loss (dashed black line) at\;$w^\star$.

For very small values of $\nu~(<10^{-3})$, we observe unsuccessful termination of the L-BFGS algorithm, due to the failure of the line-search. For medium values of $\nu~ (<1)$, the value of smooth superquantile-based function at $w_{\nu}^\star$ roughly coincides the non-smooth one. Finally for high values of $\nu~ (>10^3)$, we observe that the smooth superquantile tends to the same optimal function value of the empirical risk minimizer $w^\star$, as expected from 
Section\;\ref{sec:grad}.

	


\begin{figure}[!ht]
\centering
\vspace*{-1ex}
  \centerline{\includegraphics[width=10cm]{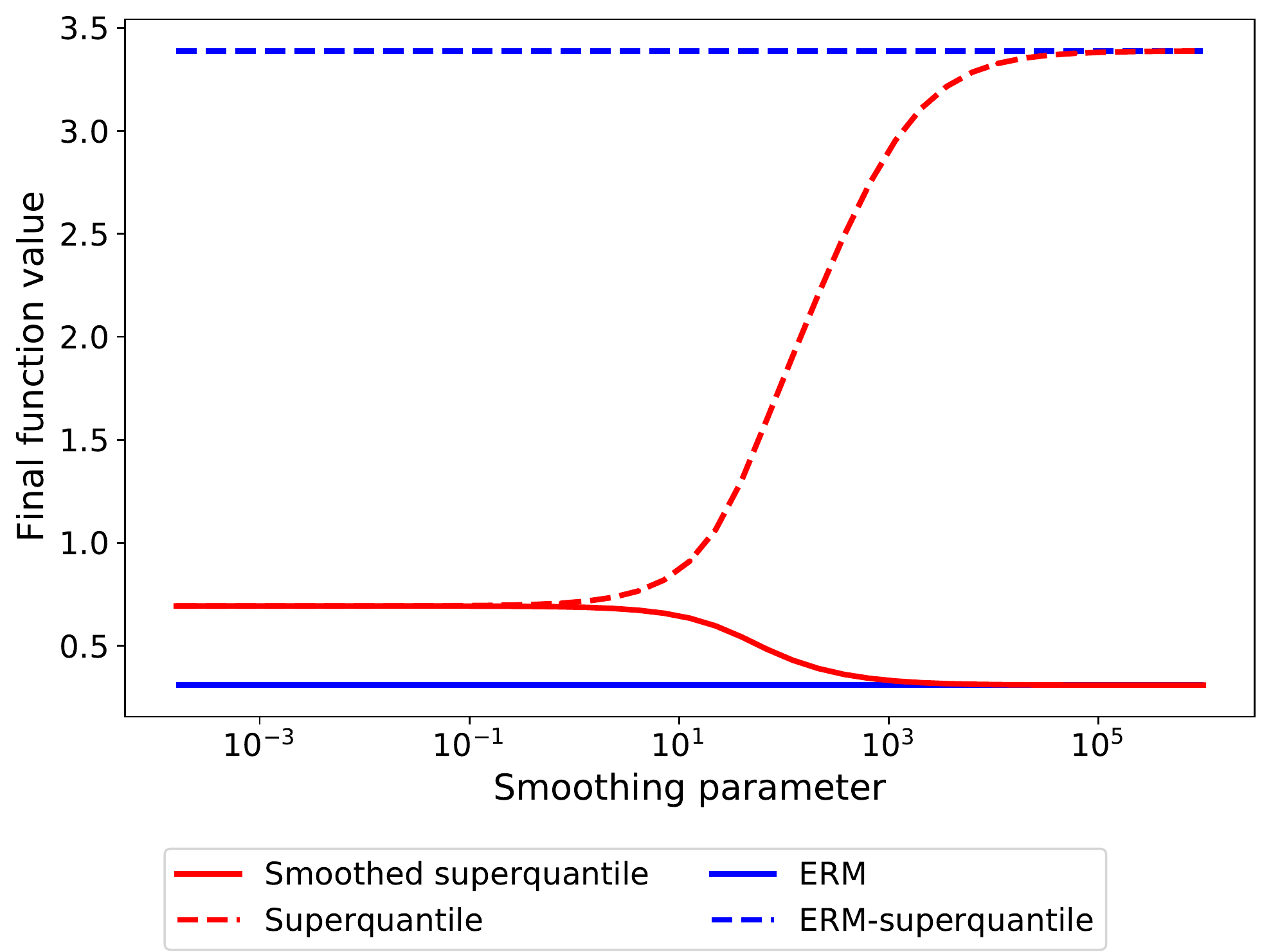}}
\vspace{-1ex}
\caption{Impact of the smoothing parameter $\nu$ on the results obtained by the quasi-Newton algorithm solving a superquantile logistic regression on the \texttt{Australian Credit} dataset. Medium values are preferable: small values compromise convergence and large values give solutions close to the standard ERM. 
\label{fig}}
\end{figure}

\subsection{Superquantile brings robustness against distributional shift}\label{sec:exp2}

In the second part of the numerical experiments, we show the benefits of the superquantile by comparing superquantile-based minimization vs. empirical risk minimization, when a distributional shift occurs, similarly to \cite{curi2020adaptive}.
For the three next standard regression or classification tasks, we proceed as follows. For each dataset, we first perform a $80\%$-$20\%$ train-test split. Second, we minimize with respect to the train set a regularized objective, both in expectation and with respect to the superquantile: 
\begin{equation}\label{eq:logistic_regression}
\begin{split}
    & \min_{\x \in \Rd} ~\mathbb{E}_{(x,y) \sim D_{\text{train}}} \ell(y,\x^\top x) + \frac{\lambda}{2} \|\x\|_2^2 \\
    & \min_{\x \in \Rd} ~[\bar{Q}_p]_{(x,y) \sim D_{\text{train}}} \ell(y,\x^\top x) + \frac{\lambda}{2} \|\x\|_2^2 \\
\end{split}
\end{equation}
We set the regularization parameter $\lambda$ to be the inverse of the number of training data-points: $\lambda = 1/n_{\text{train}}$. The above problems are solved with \texttt{SPQR} using L-BGFS. Then we perform three different types of distributional shifts on the testing set and we compare the behaviour of the superquantile-based models and the ERM models.  
We develop this approach in the next three settings.

\paragraph{Superquantile ridge regression.}


We consider a ridge regression problem, that is \eqref{eq:logistic_regression} with $\ell(y,{\x}^{\top}x) = (y-{\x}^{\top}x)^2$, on the dataset \texttt{Cpu-small}.
We minimize the two problems, first, in expectation and, second, 
with respect to the superquantile 
with several safety thresholds $p \in \{0.3, 0.5, 0.7, 0.8, 0.9, 0.95, 0.99\}$. 

We report in Figure~\ref{fig:regression} the histogram of losses on the test set and compare each trained superquantile model (in red) with the ERM model (in blue). We observe that as the probability threshold $p$ grows, the right tail distribution of losses on the test set gets shifted to the left. In particular, a dramatic decrease of the $90^{\textrm{th}}$ quantile of the losses can be observed. Thus superquantile learning allows us to reduce worst-case losses. This comes with the price of lower performances on the left tail distribution. 
This trade-off between gain on extreme cases and loss on average is typical of the impact of superquantiles. We observe a similar trade-off for other datasets; see Figures~\ref{fig:regression_appendix} in appendix.


\begin{figure}[t!]
    \centering
   \includegraphics[width=\linewidth]{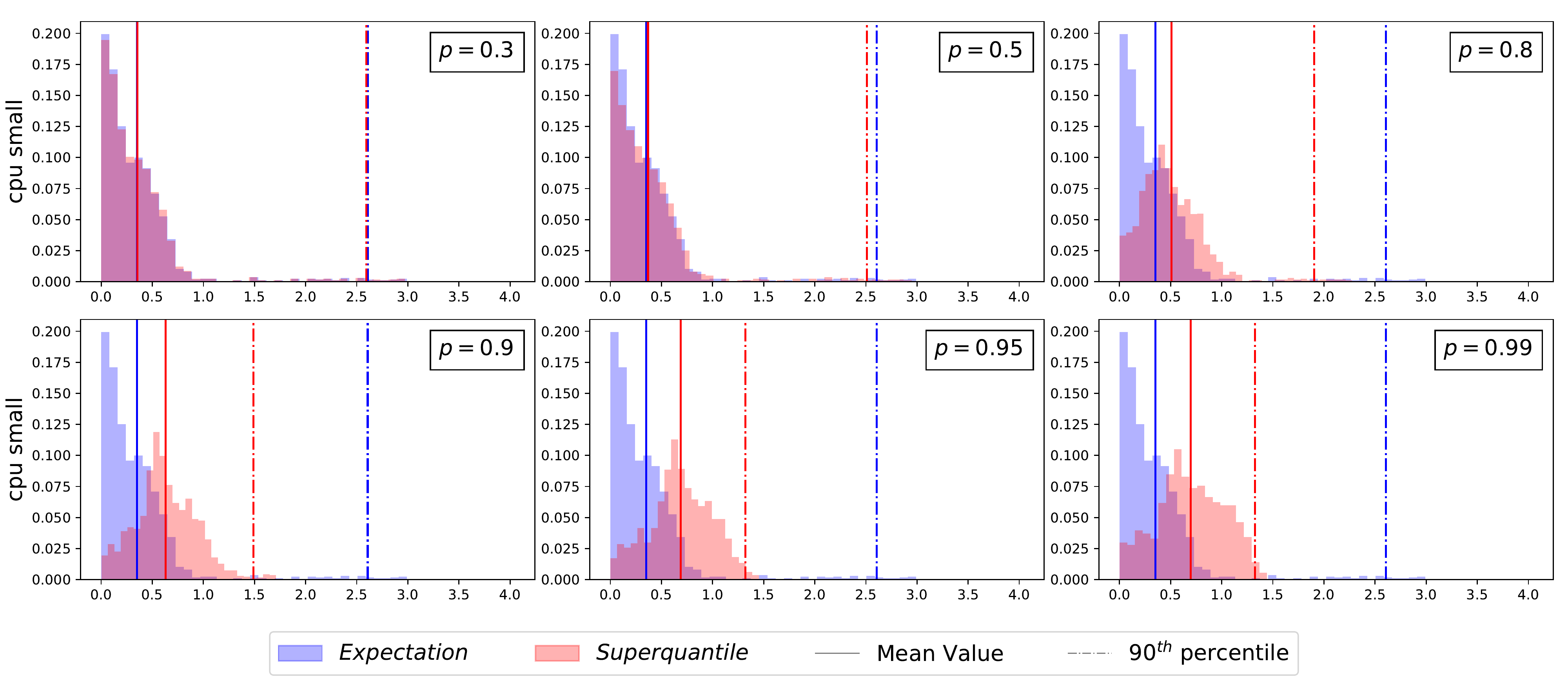}
\caption{Reshaping of the histogram of testing losses for superquantile regression models (in red) as $p$ grows. We observe a shift to the left of the $90^{\text{th}}$ quantile of losses, at the price of degrading the average value.\label{fig:regression}}
\end{figure}

\paragraph{Superquantile logistic regression.}


We consider a regularized logistic regression problem, that is\;\eqref{eq:logistic_regression} 
with $\ell(y,{\x}^{\top}x) = - y \sigma(\x^\top x) - (1-y) \sigma(\x^\top x)$ (where $\sigma(z) := \frac{1}{1 + e^{-z}}$ denotes the sigmoid function). We use 10 classification datasets from the UCI repository library
and we perform a distributional shift on the train sets: we subsample the majority class so that it accounts for only $10\%$ of the minority class. Then we train a ERM and superquantile 
models. The safety parameter $p$ is tuned via a cross validation procedure on the shifted train set. We finally compute, for the best parameter obtained, the test accuracy and the test loss. 

We report our results in Table~\ref{tab:shift_figures}. For most datasets, we note a significant decrease of the test loss with the superquantile model, when compared to ERM model. In terms of accuracy, the superquantile model offers better performance for this particular distributional shift. 


\begin{table}[!t]
\begin{tabular}{l|cc|cc}
\toprule
 & \multicolumn{2}{c}{Superquantile} & \multicolumn{2}{c}{Expectation} \\ 
 Dataset &  Accuracy  & Loss & Accuracy &  Loss \\ 
\midrule 
Adult& $53.2 \pm 0.67 $ & $0.693 \pm 0.00 $& $\boldsymbol{55.4 \pm 0.48}$& $1.072 \pm 0.01 $\\ 
Monks& $\boldsymbol{64.4 \pm 2.65} $& $0.714 \pm 0.05 $& $54.0 \pm 1.57 $& $1.207 \pm 0.08 $\\ 
Splice& $\boldsymbol{82.7 \pm 0.62} $& $0.681 \pm 0.05 $& $81.7 \pm 0.78 $& $0.557 \pm 0.04 $\\ 
Diabetes& $42.5 \pm 4.72 $& $0.694 \pm 0.00 $& $\boldsymbol{45.1 \pm 4.51}$& $1.325 \pm 0.12 $\\ 
Spambase& $\boldsymbol{78.4 \pm 1.23} $& $0.761 \pm 0.15 $& $77.1 \pm 0.87 $& $0.635 \pm 0.07 $\\ 
Mammography& $\boldsymbol{39.1 \pm 7.59} $& $0.730 \pm 0.01 $& $\boldsymbol{39.1 \pm 6.90} $& $1.293 \pm 0.09 $\\ 
Electricity& $42.8 \pm 0.40 $& $0.693 \pm 0.00 $& $\boldsymbol{47.5 \pm 0.63} $& $1.060 \pm 0.01 $\\ 
Phoneme& $37.3 \pm 5.38 $& $0.737 \pm 0.01 $& $\boldsymbol{50.5 \pm 3.10} $& $1.292 \pm 0.04 $\\ 
Nomao& $\boldsymbol{87.5 \pm 0.22} $& $0.413 \pm 0.03 $& $\boldsymbol{87.4 \pm 0.23} $& $0.394 \pm 0.02 $\\ 
Skin-segmentation& $\boldsymbol{92.1 \pm 0.11} $& $0.420 \pm 0.00 $& $\boldsymbol{91.9 \pm 0.05} $& $0.537 \pm 0.01 $\\ 
\bottomrule
\end{tabular}
\caption{Comparison of performances between a superquantile model and a risk-neutral model for a logistic regression on a distributionally shifted dataset.}
\label{tab:shift_figures}
\end{table}

\paragraph{Robustness to all possible distributional shifts.}

We take the same setting as before, focusing on the \texttt{splice} dataset, and now we perform a sequence of distributional shifts on the training set by rebalancing all the proportions of the two classes. More precisely, for a fixed $\alpha \in (0,1)$, we compute the number $n_{\text{min}}$ of samples from the minority class; we randomly select $\left \lceil \alpha n_{\text{min}} \right \rceil$ points from the majority class and $\left \lceil (1 - \alpha) n_{\text{min}} \right \rceil$ from the minority class. We train on the shifted train set the two logistic regression models of~\eqref{eq:logistic_regression}. We repeat this experiment for 5 different seeds and we compute the average test losses and test accuracies of both models. The experiment is conducted for $100$ values of $\alpha$ evenly spread on $(0,1)$.

The histograms of Figure~\ref{fig:classification} depicts the performances,  as $\alpha$ varies, of ERM against the superquantile (for a fixed probability threshold $p$).  In terms of losses, the superquantile model brings better performances for almost all values of $p$. In particular, the $90^{\textrm{th}}$ quantile of the losses over all considered shifts gets notably decreased for $p$. In terms of accuracy, the superquantile models brings better performance with respect to distributional shifts for all values of $p$. Such behaviours are also observed with other datasets, as those depicted in Figures\;\ref{fig:classification_losses_appendix} and\;\ref{fig:classification_accuracies_appendix} in Appendix.

\begin{figure}[h!]
    \centering
   \includegraphics[width=\linewidth]{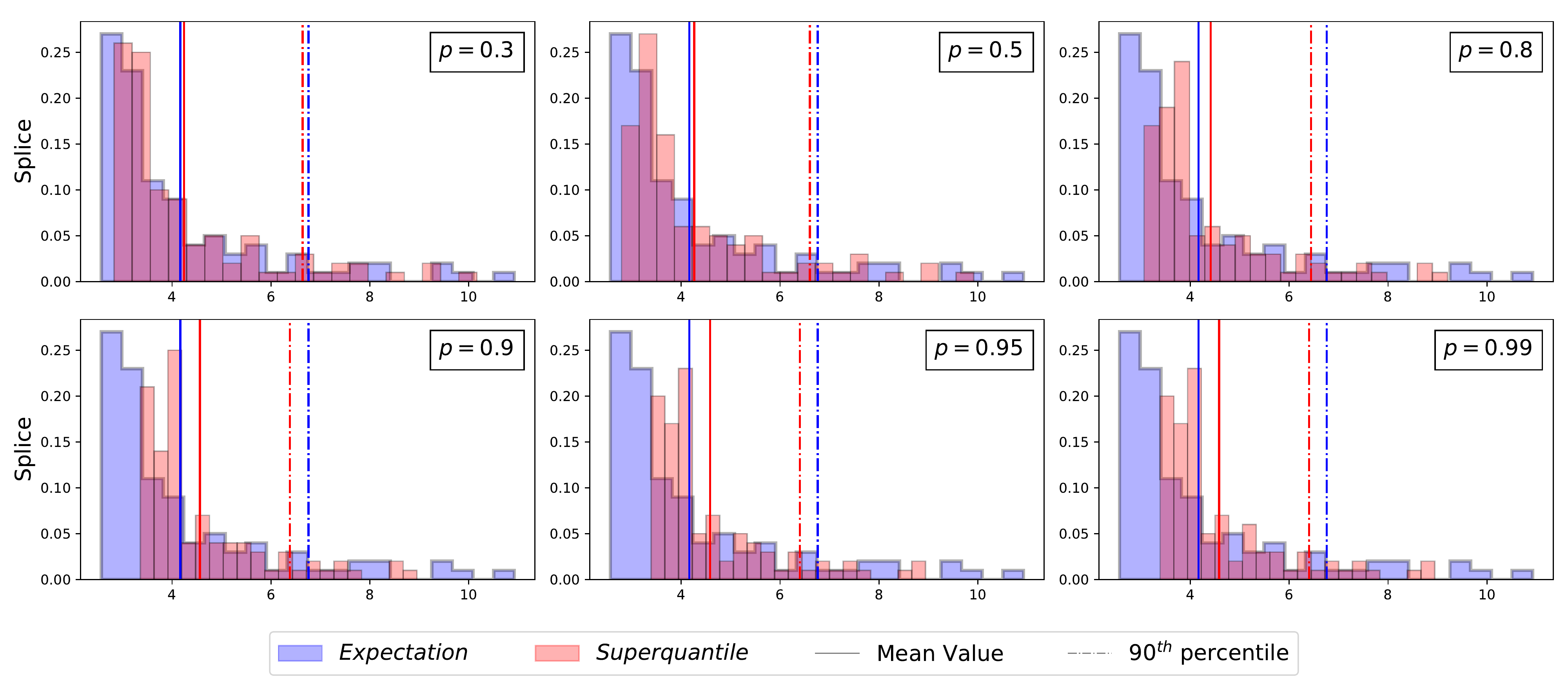}
  
   \includegraphics[width=\linewidth]{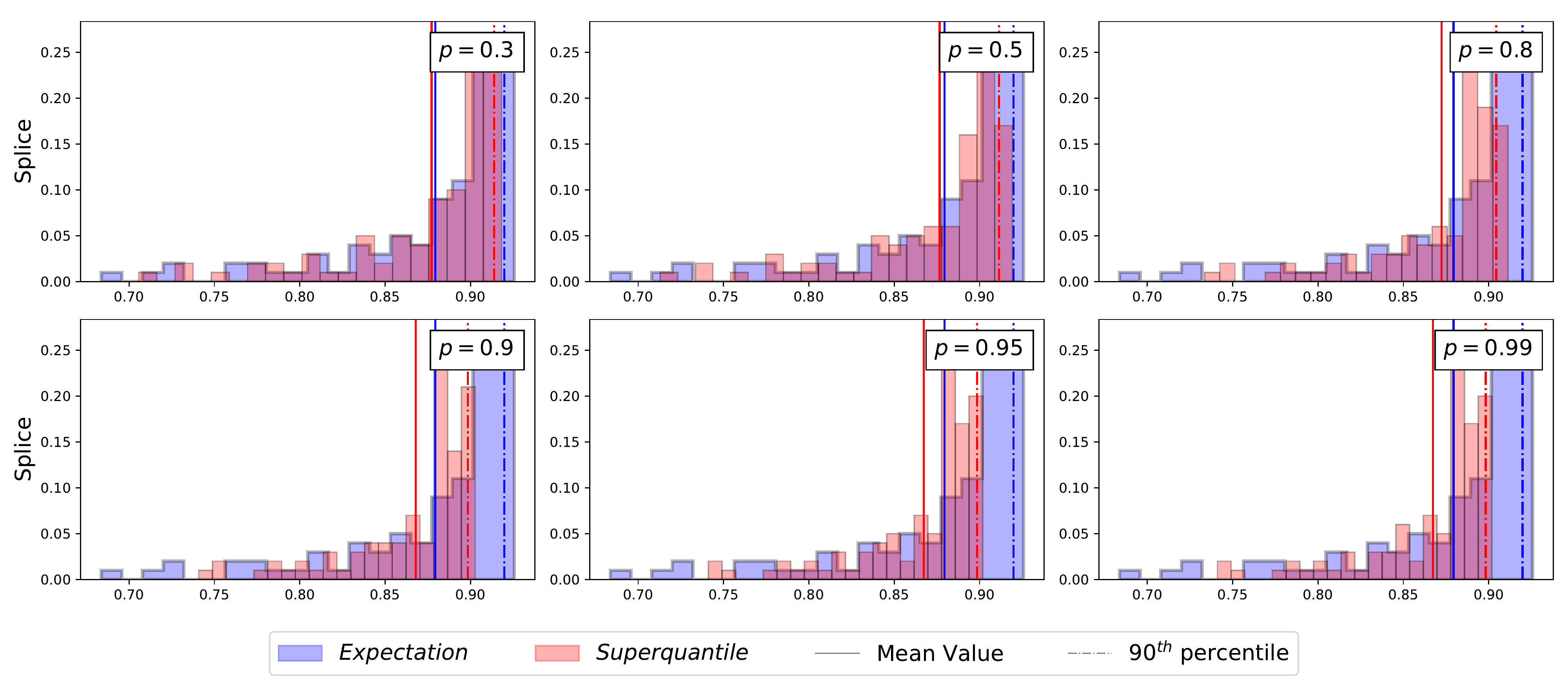}
\caption{Reshaping of histograms of test losses (top) and test accuracies (bottom) over all class imbalances (for a classification task with  logistic regression and the \texttt{splice} dataset). \label{fig:classification}}
\end{figure}

\section{Conclusion, perspectives}
Risk-sensitive optimization can play an important role in the design of safer machine learning models involved in automated decision making. We provide here a software package to tackle superquantile-based learning problems using standard first-order optimization algorithms. The software package is publicly available on the authors' websites.
We have described the main components of the optimization algorithms and how they can be made to tackle superquantile-based learning problems using smoothing techniques in particular. We would tend to recommend the use of a combination of smoothed oracles and batch gradient algorithms to experiment with superquantile-based objectives.

This work can be included in the more general research stream on developing operational tools for distributionally robust learning, which has recently gained interest and focus in the machine learning community; see e.g.\;the recent textbook 
\cite{chen2020distributionally}.
Recent work on related topics developing optimization algorithms with improved complexity bounds~\cite{curi2020adaptive,levy2020large}, exploring fairness challenges~\cite{willaimson2019fairness}, tackling data heterogeneity problems~\cite{laguel:device}, shows the burst of activity in this general area and suggests a number of venues for future investigation.

\begin{acknowledgements}
We acknowledge support from ANR-19-P3IA-0003 (MIAI -- Grenoble Alpes), as well as NSF DMS 2023166, DMS 1839371, CCF 2019844, CIFAR LMB, and faculty research awards. 
\end{acknowledgements}
\clearpage

%
%


\bibliographystyle{plainnat}
\bibliography{references}

\begin{thebibliography}{40}
\providecommand{\natexlab}[1]{#1}
\providecommand{\url}[1]{\texttt{#1}}
\expandafter\ifx\csname urlstyle\endcsname\relax
  \providecommand{\doi}[1]{doi: #1}\else
  \providecommand{\doi}{doi: \begingroup \urlstyle{rm}\Url}\fi

\bibitem[Beck and Teboulle(2012)]{doi:10.1137/100818327}
A.~Beck and M.~Teboulle.
\newblock Smoothing and first order methods: A unified framework.
\newblock \emph{SIAM Journal on Optimization}, 2012.
\newblock URL \url{https://doi.org/10.1137/100818327}.

\bibitem[Ben-Tal and Teboulle(2007)]{ben2007old}
A.~Ben-Tal and M.~Teboulle.
\newblock An old-new concept of convex risk measures: The optimized certainty
  equivalent.
\newblock \emph{Mathematical Finance}, 2007.

\bibitem[Ben-Tal et~al.(2009)Ben-Tal, El~Ghaoui, and Nemirovski]{ben2009robust}
A.~Ben-Tal, L.~El~Ghaoui, and A.~Nemirovski.
\newblock \emph{Robust optimization}.
\newblock Princeton University Press, 2009.

\bibitem[Bertsekas(2015)]{bertsekas2015convex}
D.P. Bertsekas.
\newblock \emph{Convex Optimization Algorithms}.
\newblock Athena Scientific, 2015.

\bibitem[Bertsekas(2016)]{bertsekas2016nonlinear}
D.P. Bertsekas.
\newblock \emph{Nonlinear Programming}.
\newblock Athena Scientific, 2016.

\bibitem[Chen et~al.(2020)Chen, Paschalidis, et~al.]{chen2020distributionally}
Ruidi Chen, Ioannis~Ch Paschalidis, et~al.
\newblock Distributionally robust learning.
\newblock \emph{Foundations and Trends{\textregistered} in Optimization},
  4\penalty0 (1-2):\penalty0 1--243, 2020.

\bibitem[Chow et~al.(2015)Chow, Tamar, Mannor, and Pavone]{chow2015risk}
Yinlam Chow, Aviv Tamar, Shie Mannor, and Marco Pavone.
\newblock Risk-sensitive and robust decision-making: a cvar optimization
  approach.
\newblock In \emph{Proceedings of the 28th International Conference on Neural
  Information Processing Systems-Volume 1}, pages 1522--1530, 2015.

\bibitem[Condat(2016)]{condat2016fast}
L.~Condat.
\newblock Fast projection onto the simplex and the l1 ball.
\newblock \emph{Mathematical Programming}, 2016.

\bibitem[Curi et~al.(2020)Curi, Levy, Jegelka, and Krause]{curi2020adaptive}
Sebastian Curi, Kfir~Y Levy, Stefanie Jegelka, and Andreas Krause.
\newblock Adaptive sampling for stochastic risk-averse learning.
\newblock \emph{Advances in Neural Information Processing Systems}, 33, 2020.

\bibitem[Duchi and Namkoong(2019)]{duchi2019variance}
J.~C. Duchi and H.~Namkoong.
\newblock Variance-based {R}egularization with {C}onvex {O}bjectives.
\newblock \emph{Journal of Machine Learning Research}, 2019.

\bibitem[Duchi and Namkoong(2018)]{duchi2018learning}
John Duchi and Hongseok Namkoong.
\newblock Learning models with uniform performance via distributionally robust
  optimization.
\newblock \emph{arXiv preprint arXiv:1810.08750}, 2018.

\bibitem[Fan et~al.(2017)Fan, Lyu, Ying, and Hu]{fan2017learning}
Yanbo Fan, Siwei Lyu, Yiming Ying, and Bao-Gang Hu.
\newblock Learning with average top-k loss.
\newblock In \emph{NIPS}, 2017.

\bibitem[Hiriart-Urruty and Lemar{\'e}chal(2013)]{hiriart2013convex}
J.-B. Hiriart-Urruty and C.~Lemar{\'e}chal.
\newblock \emph{Convex analysis and minimization algorithms I: Fundamentals}.
\newblock Springer science \& business media, 2013.

\bibitem[Ho-Nguyen and Wright(2020)]{ho2020adversarial}
Nam Ho-Nguyen and Stephen~J Wright.
\newblock Adversarial classification via distributional robustness with
  wasserstein ambiguity.
\newblock \emph{preprint arXiv:2005.13815}, 2020.

\bibitem[Kawaguchi and Lu(2020)]{kawaguchi2020ordered}
Kenji Kawaguchi and Haihao Lu.
\newblock Ordered sgd: A new stochastic optimization framework for empirical
  risk minimization.
\newblock In \emph{International Conference on Artificial Intelligence and
  Statistics}, pages 669--679. PMLR, 2020.

\bibitem[Knight(2018)]{knight2018selfdriving}
Will Knight.
\newblock A self-driving {Uber} has killed a pedestrian in {Arizona}.
\newblock \emph{Ethical Tech}, March 2018.

\bibitem[Kuhn et~al.(2019)Kuhn, Esfahani, Nguyen, and
  Shafieezadeh-Abadeh]{kuhn2019wasserstein}
D.~Kuhn, P.M. Esfahani, V.~Anh Nguyen, and S.~Shafieezadeh-Abadeh.
\newblock Wasserstein distributionally robust optimization: Theory and
  applications in machine learning.
\newblock In \emph{Operations Research \& Management Science in the Age of
  Analytics}. INFORMS, 2019.

\bibitem[Laguel et~al.(2020)Laguel, Malick, and Harchaoui]{laguel2020first}
Yassine Laguel, J{\'e}r{\^o}me Malick, and Zaid Harchaoui.
\newblock First-order optimization for superquantile-based supervised learning.
\newblock In \emph{2020 IEEE 30th International Workshop on Machine Learning
  for Signal Processing (MLSP)}, pages 1--6. IEEE, 2020.

\bibitem[Laguel et~al.(2021)Laguel, Pillutla, Malick, and
  Harchaoui]{laguel:device}
Yassine Laguel, Krishna Pillutla, Jerome Malick, and Zaid Harchaoui.
\newblock A superquantile approach to federated learningwith heterogeneous
  devices.
\newblock In \emph{55th Annual Conference on Information Sciences and Systems,
  {CISS}}. {IEEE}, 2021.

\bibitem[Lam et~al.(2015)Lam, Pitrou, and Seibert]{10.1145/2833157.2833162}
Siu~Kwan Lam, Antoine Pitrou, and Stanley Seibert.
\newblock Numba: A llvm-based python jit compiler.
\newblock In \emph{Proceedings of the Second Workshop on the LLVM Compiler
  Infrastructure in HPC}, LLVM '15, New York, NY, USA, 2015. Association for
  Computing Machinery.
\newblock ISBN 9781450340052.

\bibitem[Lee and Raginsky(2018)]{lee2018minimax}
J.~Lee and M.~Raginsky.
\newblock Minimax statistical learning with {W}asserstein distances.
\newblock In \emph{Advances in Neural Information Processing Systems}, 2018.

\bibitem[Levy et~al.(2020)Levy, Carmon, Duchi, and Sidford]{levy2020large}
Daniel Levy, Yair Carmon, John~C Duchi, and Aaron Sidford.
\newblock Large-scale methods for distributionally robust optimization.
\newblock \emph{Advances in Neural Information Processing Systems}, 33, 2020.

\bibitem[Luo et~al.(2020)Luo, Ye, Huang, and Zhang]{luo2020stochastic}
Luo Luo, Haishan Ye, Zhichao Huang, and Tong Zhang.
\newblock Stochastic recursive gradient descent ascent for stochastic
  nonconvex-strongly-concave minimax problems.
\newblock \emph{Advances in Neural Information Processing Systems}, 33, 2020.

\bibitem[Metz(2018)]{metz2018microsoft}
Rachel Metz.
\newblock Microsoft's neo-{Nazi} sexbot was a great lesson for makers of {AI}
  assistants.
\newblock \emph{Artificial Intelligence}, March 2018.

\bibitem[Nesterov(2005)]{nesterov2005smooth}
Y.~Nesterov.
\newblock Smooth minimization of non-smooth functions.
\newblock \emph{Mathematical programming}, 2005.

\bibitem[Nesterov(2009)]{nesterov2009primal}
Y.~Nesterov.
\newblock Primal-dual subgradient methods for convex problems.
\newblock \emph{Mathematical programming}, 2009.

\bibitem[Nocedal and Wright(2006)]{nocedal2006numerical}
Jorge Nocedal and Stephen Wright.
\newblock \emph{Numerical optimization}.
\newblock Springer Science \& Business Media, 2006.

\bibitem[Owen(2001)]{owen2001empirical}
A.B. Owen.
\newblock \emph{Empirical Likelihood}.
\newblock Chapman \& Hall/CRC Monographs on Statistics \& Applied Probability.
  CRC Press, 2001.

\bibitem[Pedregosa et~al.(2011)]{scikit-learn}
F.~Pedregosa et~al.
\newblock Scikit-learn: Machine learning in {P}ython.
\newblock \emph{Journal of Machine Learning Research}, 2011.

\bibitem[Recht et~al.(2019)Recht, Roelofs, Schmidt, and
  Shankar]{recht2019imagenet}
Benjamin Recht, Rebecca Roelofs, Ludwig Schmidt, and Vaishaal Shankar.
\newblock Do imagenet classifiers generalize to imagenet?
\newblock \emph{arXiv:1902.10811}, 2019.

\bibitem[Rockafellar and Royset(2013)]{rockafellar2013superquantiles}
R.~T. Rockafellar and J.~O Royset.
\newblock Superquantiles and their applications to risk, random variables, and
  regression.
\newblock In \emph{Theory Driven by Influential Applications}. INFORMS, 2013.

\bibitem[Rockafellar(2018)]{rockafellar2018solving}
R~Tyrrell Rockafellar.
\newblock Solving stochastic programming problems with risk measures by
  progressive hedging.
\newblock \emph{Set-Valued and Variational Analysis}, 26\penalty0 (4):\penalty0
  759--768, 2018.

\bibitem[Rockafellar et~al.(2008)Rockafellar, Uryasev, and
  Zabarankin]{rockafellar2008risk}
R.T. Rockafellar, S.~Uryasev, and M.~Zabarankin.
\newblock Risk tuning with generalized linear regression.
\newblock \emph{Mathematics of Operations Research}, 2008.

\bibitem[Rockafellar et~al.(2014)Rockafellar, Royset, and
  Miranda]{rockafellar2014superquantile}
R.T. Rockafellar, J.O. Royset, and S.I. Miranda.
\newblock Superquantile regression with applications to buffered reliability,
  uncertainty quantification, and conditional value-at-risk.
\newblock \emph{European Journal of Operational Research}, 2014.

\bibitem[Rockafellar and Uryasev(2000)]{rockafellar2000optimization}
T.~Rockafellar and S.~Uryasev.
\newblock Optimization of {C}onditional {V}alue-at-{R}isk.
\newblock \emph{Journal of Risk}, 2000.

\bibitem[Ruszczy{\'n}ski and Shapiro(2006)]{ruszczynski2006optimization}
A.~Ruszczy{\'n}ski and A.~Shapiro.
\newblock Optimization of convex risk functions.
\newblock \emph{Mathematics of operations research}, 2006.

\bibitem[Shapiro et~al.(2014)Shapiro, Dentcheva, and
  Ruszczy{\'n}ski]{shapiro2014lectures}
A.~Shapiro, D.~Dentcheva, and A.~Ruszczy{\'n}ski.
\newblock \emph{Lectures on stochastic programming: modeling and theory}.
\newblock SIAM, 2014.

\bibitem[Soma and Yoshida(2020)]{soma2020statistical}
Tasuku Soma and Yuichi Yoshida.
\newblock Statistical learning with conditional value at risk.
\newblock \emph{arXiv preprint arXiv:2002.05826}, 2020.

\bibitem[Wilder(2018)]{wilder2018risk}
Bryan Wilder.
\newblock Risk-sensitive submodular optimization.
\newblock In \emph{Proceedings of the AAAI Conference on Artificial
  Intelligence}, volume~32, 2018.

\bibitem[Williamson and Menon(2019)]{willaimson2019fairness}
Robert~C. Williamson and Aditya~Krishna Menon.
\newblock {Fairness Risk Measures}.
\newblock In \emph{International Conference on Machine Learning}, 2019.

\end{thebibliography}

\appendix

\section{Additional numerical results}


\begin{figure}[h!]
\centering
\begin{subfigure}[b]{\textwidth}
    \centering
   \includegraphics[width=\linewidth]{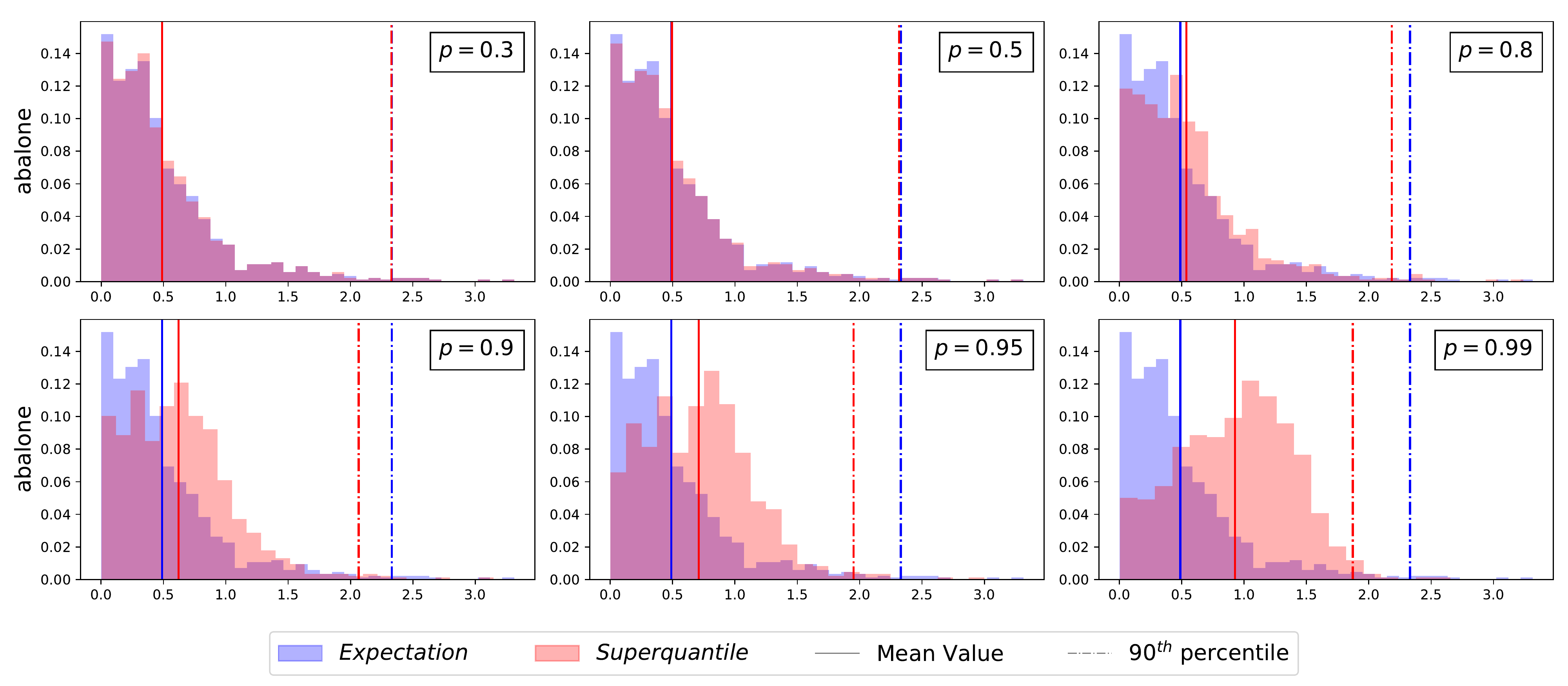}
   \label{fig:abalone} 
\end{subfigure}

\vspace{-20pt}
\begin{subfigure}[b]{\textwidth}
    \centering
   \includegraphics[width=\linewidth]{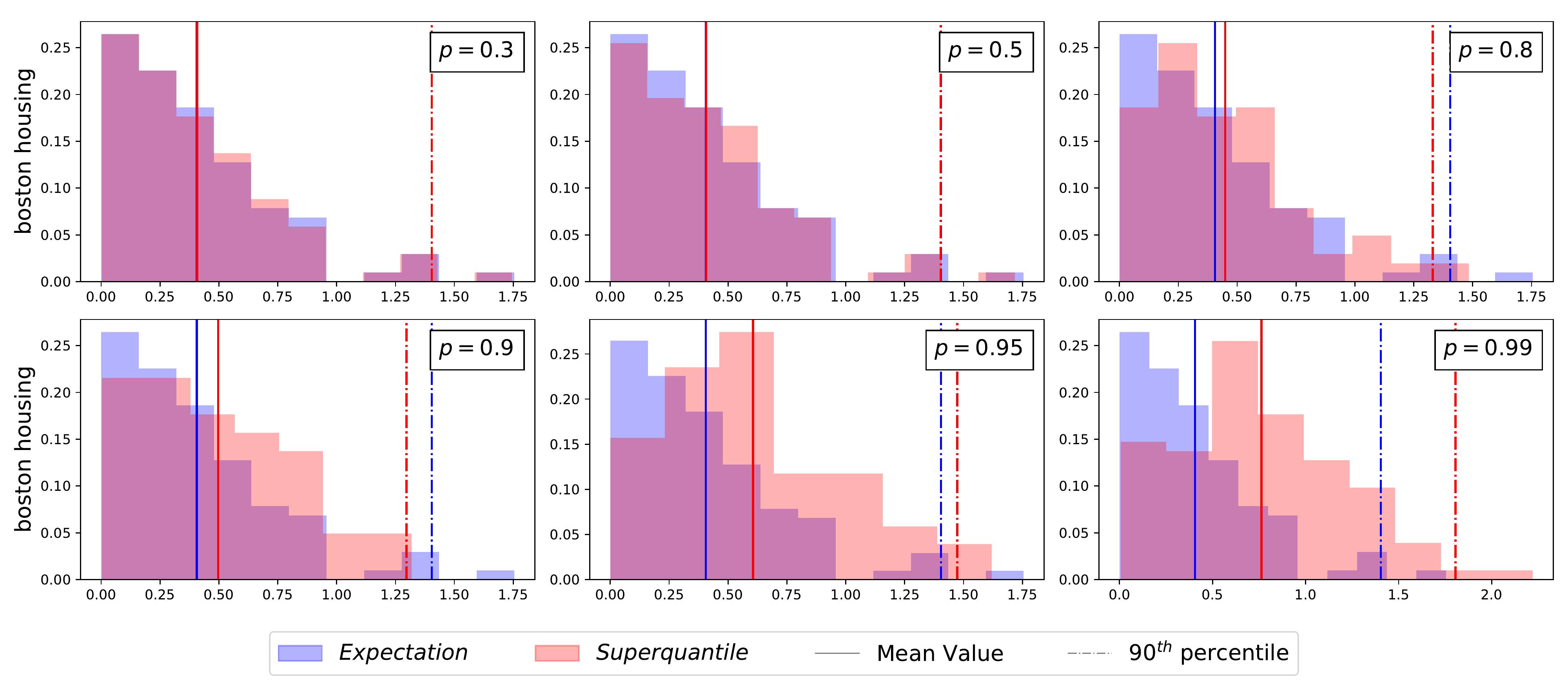}
   \label{fig:boston}
\end{subfigure}

\caption{Ridge regression: comparison of performances between a superquantile model and a ERM model for \texttt{Abalone} and \texttt{Boston Housing}. 
\label{fig:regression_appendix}}
\end{figure}

In this Appendix section, we collect additional results comparing classical supervised learning and superquantile-based supervised learning using the optimization algorithms described in the main text. The experimental setting is exactly the one of Section\;\ref{sec:exp2}, yet we consider here other datasets from UCI repository. The results obtained are essentially the same as the ones presented in Section\;\ref{sec:exp2}, suggesting a greater control of extreme losses and a greater robustness to distributional shift of superquantile-based supervised learning. We refer to the main text for the discussions on these main observations, and we give here additional observations.

Figure \ref{fig:regression_appendix} shows the same behaviour as in Figure\;\ref{fig:regression}:  as the probability threshold $p$ grows, the right tail distribution of losses on the test set gets shifted to the left. We can indeed see, on the subfigures, the reshaping of the histogram and the translation of the $90^{\textrm{th}}$ quantile to the left. Two exceptions should be noticed though:  for the dataset \texttt{boston housing} with $p=0.95$ and $0.99$, the superquantile approach was not able decrease the $90^{\textrm{th}}$ quantile (see the last two subplots at the bottom). This would suggests to avoid in general using too large values of $p$ that would restrict the computational effort to a too small fraction of extreme scenarios only.

Figures\;\ref{fig:classification_losses_appendix} and\;\ref{fig:classification_accuracies_appendix} show results similar to the ones presented in Figure\;\ref{fig:classification} about the resistance to distributional shifts. The three datasets considered here provide a variety of histograms shapes. We see on Figure\;\ref{fig:classification_losses_appendix} that the superquantile brings better performances on the worst-case test losses for all values of $p$ (except for the \texttt{skin-segmentation} with $p \geq 0.9$). Similarly, on Figure\;\ref{fig:classification_accuracies_appendix}, we see, in most cases, improvements of the worst-case test accuracy: sometimes the improvement is important (e.g.\;\texttt{Australian} with $p=0.9$), sometimes it is more marginal or even negative (e.g.\;\texttt{monks-problem1} with $p=0.9$). 

Interestingly, one observes that, for each dataset, there is a particular value of $p$ (depending on the dataset) for which the histogram of losses
 gets shifted to the left uniformly ($p=0.8$ for \texttt{monks-problem-1} and $p=0.3$ for the \texttt{skin-segmentation} dataset). This highlights the importance of a careful tuning of $p$ to address the worst-case outcomes.

\begin{figure}[h!]
\centering
\begin{subfigure}[b]{\textwidth}
    \centering
   \includegraphics[width=\linewidth]{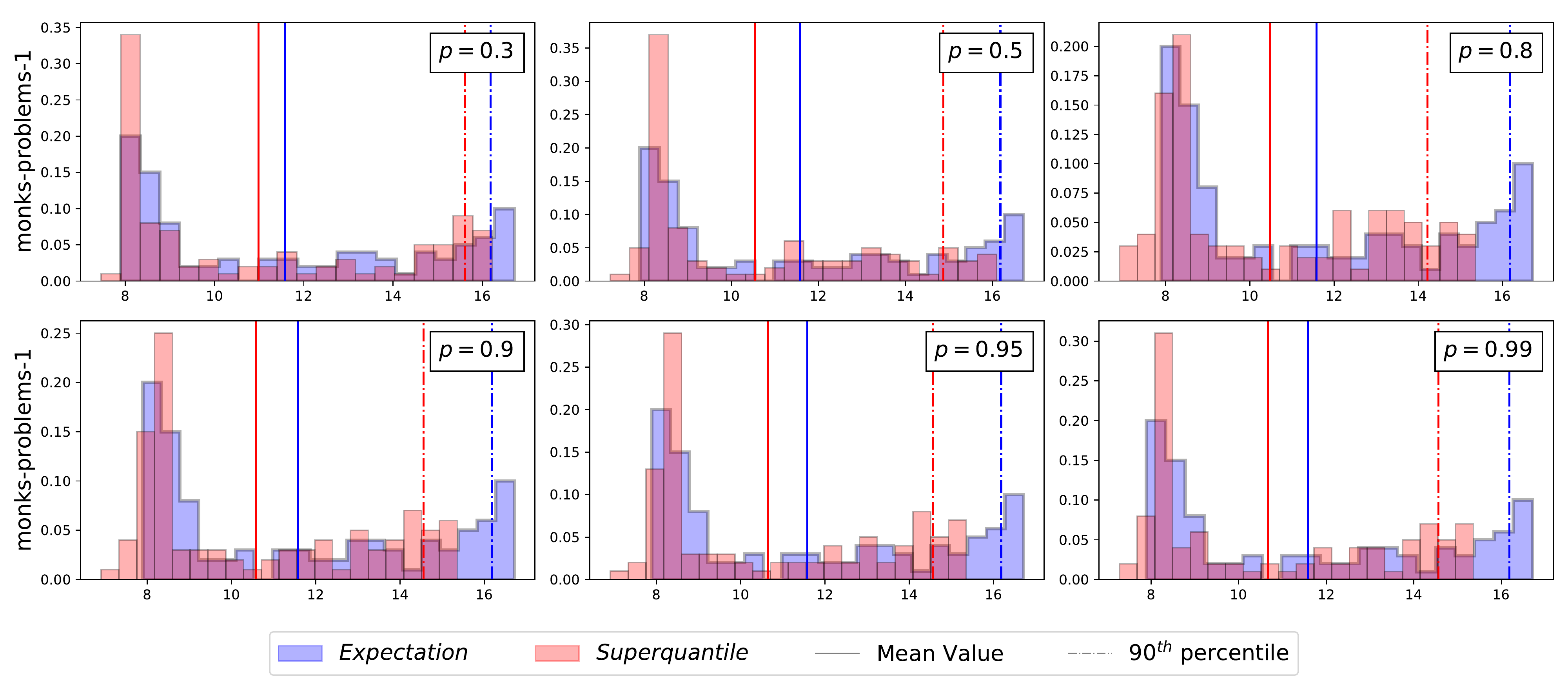}
   \label{fig:loss-monks} 
\end{subfigure}

\vspace{-20pt}
\begin{subfigure}[b]{\textwidth}
    \centering
   \includegraphics[width=\linewidth]{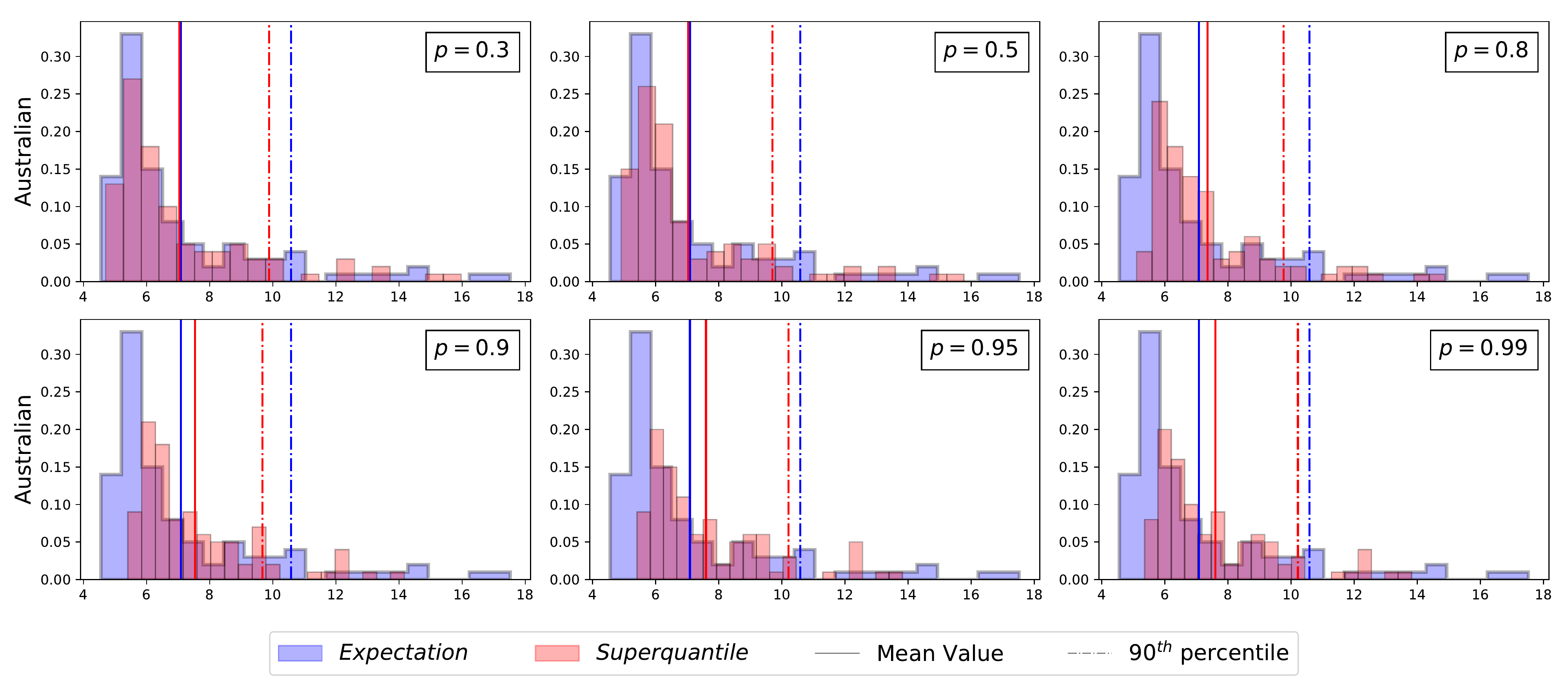}
   \label{fig:loss-aust} 
\end{subfigure}

\vspace{-20pt}
\begin{subfigure}[b]{\textwidth}
    \centering
   \includegraphics[width=\linewidth]{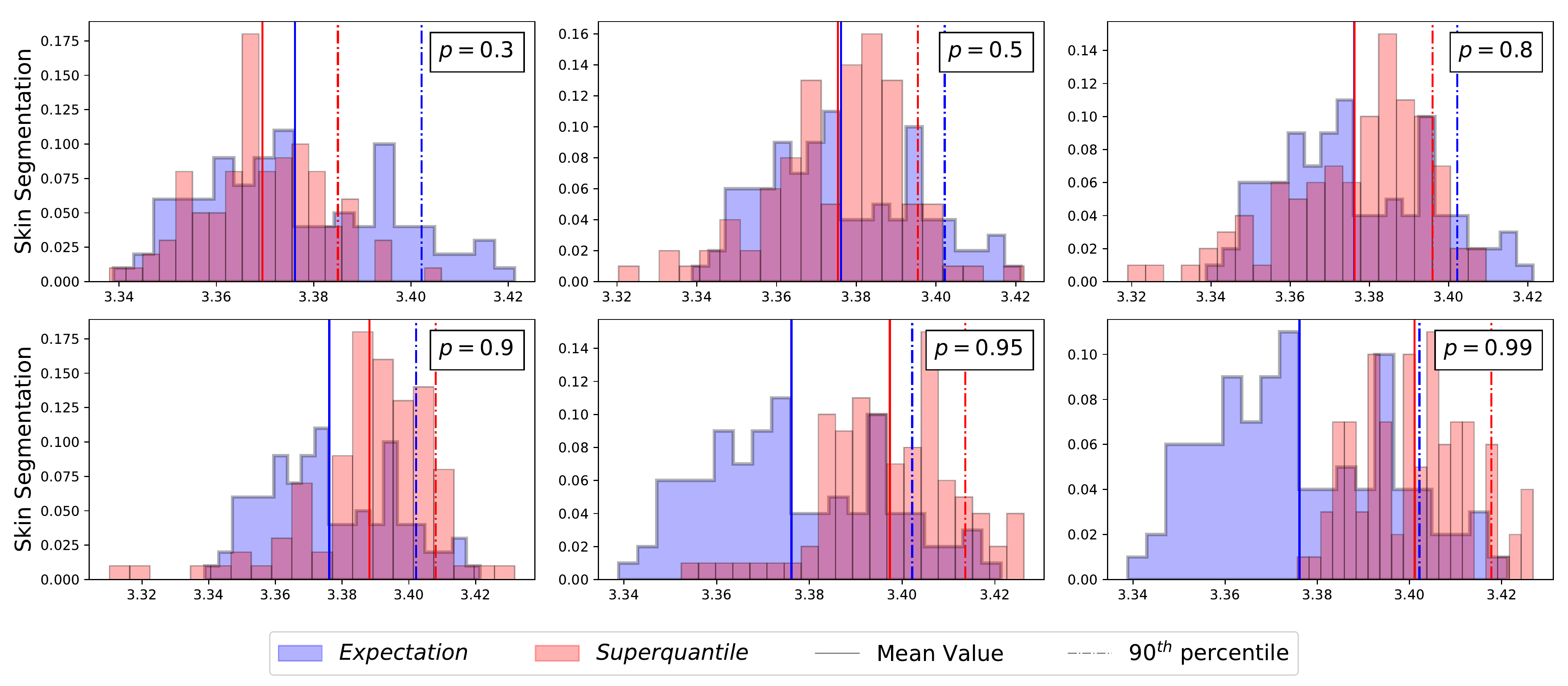}
   \label{fig:loss-skin}
\end{subfigure}
\vspace{-12mm}
\caption{Histogram of test \underline{losses} over all distributional shifts for the datasets \texttt{monks-problem-1}, \texttt{australian-credit}, and \texttt{skin-segmentation}.
}
\label{fig:classification_losses_appendix}
\end{figure}

\begin{figure}[h!]
\centering
\begin{subfigure}[b]{\textwidth}
    \centering
   \includegraphics[width=\linewidth]{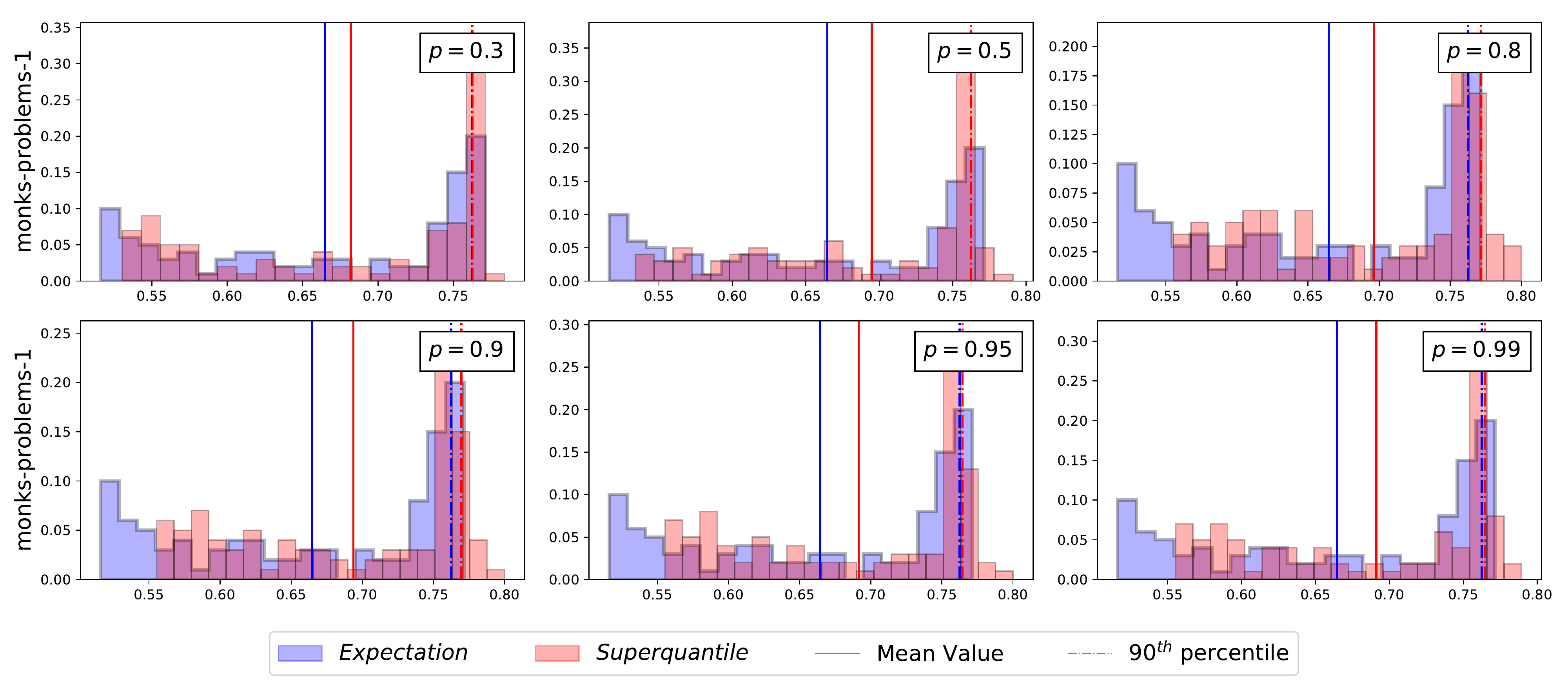}
   \label{fig:acc-monks} 
\end{subfigure}

\vspace{-20pt}
\begin{subfigure}[b]{\textwidth}
    \centering
   \includegraphics[width=\linewidth]{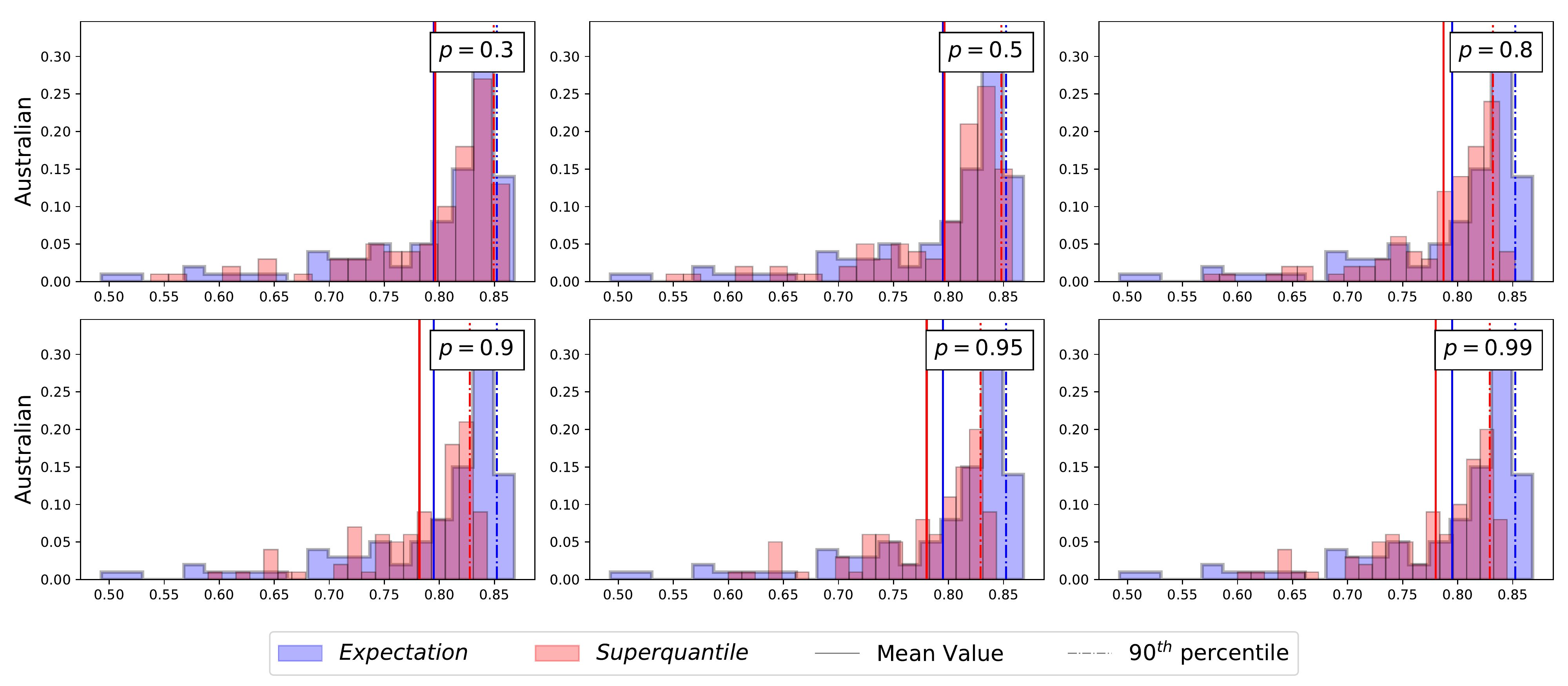}
   \label{fig:acc-aust} 
\end{subfigure}

\vspace{-20pt}
\begin{subfigure}[b]{\textwidth}
    \centering
   \includegraphics[width=\linewidth]{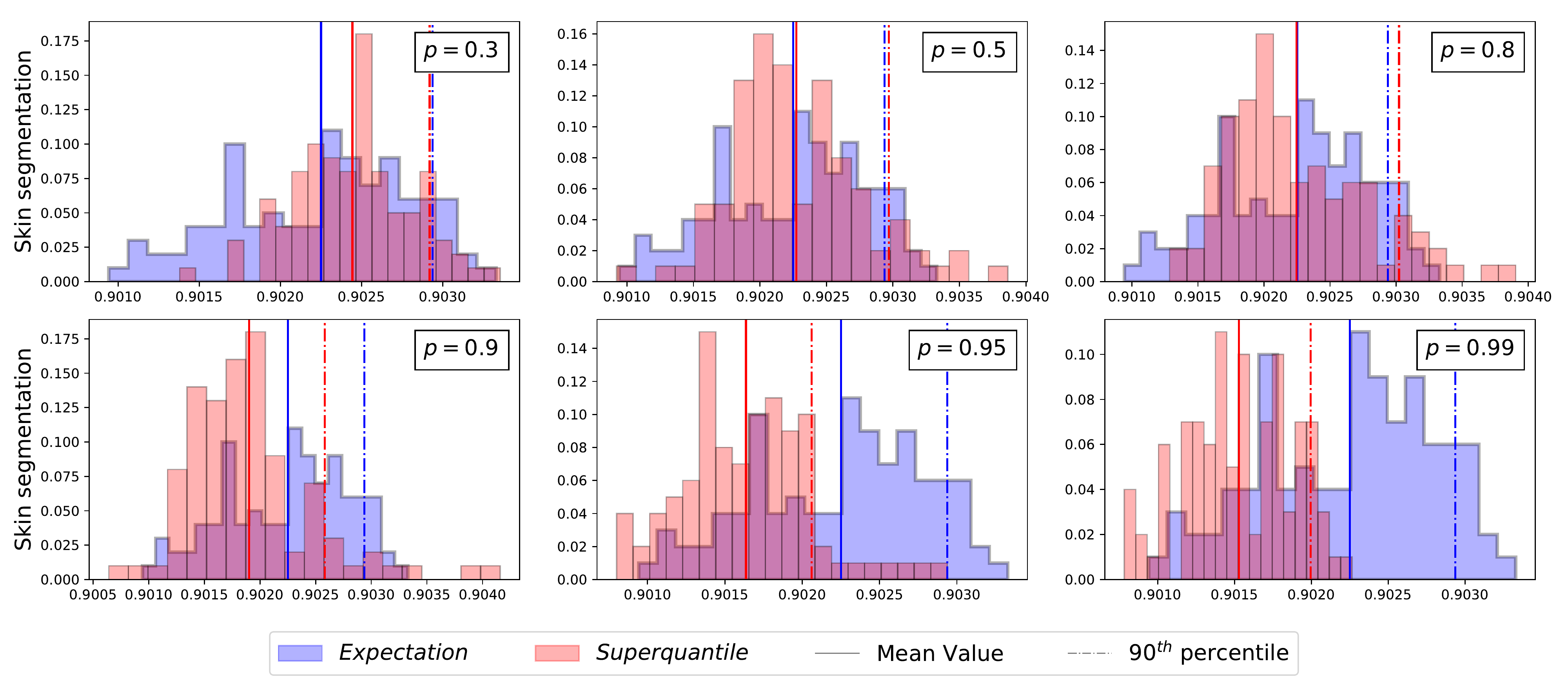}
   \label{fig:acc-skin}
\end{subfigure}
\vspace{-12mm}
\caption{Histogram of test \underline{accuracy} over all distributional shifts for the datasets \texttt{monks-problem-1}, \texttt{australian-credit}, and \texttt{skin-segmentation}.}
\label{fig:classification_accuracies_appendix}
\end{figure}


\end{document}